\newcounter{alphthm}
\theoremstyle{plain}
\newtheorem{theorem}{Theorem}[section]
\newtheorem{lemma}[theorem]{Lemma}
\newtheorem{cor}[theorem]{Corollary}
\theoremstyle{definition}
\newtheorem{defn}[theorem]{Definition}
\newtheorem{example}[theorem]{Example}
\newcommand{\be}{\begin{equation}}
\newcommand{\ee}{\end{equation}}
\newcommand{\ben}{\begin{enumerate}}
\newcommand{\een}{\end{enumerate}}
\begin{document}
\title{On $\phi$-Recurrent $(\kappa, \mu)$-Contact Metric Manifolds}
\author{E. Peyghan and A. Tayebi}
\maketitle

\maketitle
\begin{abstract}
In this paper, we show that there is no $\phi$-recurrent Sasakian manifold. Then we prove that the only flat 3-dimensional manifolds are $\phi$-recurrent $(k, \mu)$-contact metric manifolds.\\\\
{\bf {Keywords}}: Locally $\phi$-symmetric, Sasakian manifold, $(\kappa, \mu)$-contact metric manifold, $\phi$-recurrent.\footnote{ 2010 Mathematics subject Classification: 53C15, 53C40.}
\end{abstract}

\section{Introduction}
Local symmetry is a very strong condition for the class of Sasakian manifolds. Indeed, such spaces must have constant curvature equal to 1 \cite{O}.
Thus Takahashi introduced the notion of a (locally) $\phi$-symmetric space
in the context of Sasakian geometry \cite{T}.
Generalizing the notion of $\phi$-symmetry, De-Shaikh-Biswas  introduced the notion of $\phi$-recurrent Sasakian manifold \cite{DSB}. In  \cite{BBV}, Boeckx-Buecken-Vanhecke  introduced and
studied  the notion of $\phi$-symmetry with several examples. In \cite{Bo}, Boeckx proved that every non-Sasakian $(\kappa, \mu)$-manifold is locally $\phi$-symmetric in the strong sense.

In \cite{JYD}, Jun-Yildiz-De introduced a type of $(\kappa, \mu)$-contact metric manifolds
called $\phi$-recurrent $(\kappa, \mu)$-contact metric manifold which generalizes the notion
of $\phi$-symmetric $(\kappa, \mu)$-contact metric structure of Boeckx. They proved that  three-dimensional locally $\phi$-recurrent $(\kappa, \mu)$-contact metric
manifolds are of constant curvature. They show the existence of $\phi$-recurrent
$(\kappa, \mu)$-manifold by using  an example which is neither locally symmetric nor locally
$\phi$-symmetric.

In this paper, we introduce contact metric manifold, Sasakian manifold, $(\kappa, \mu)$-contact metric manifold and study important properties of these spaces. We show that there exists no $\phi$-recurrent Sasakian manifold. Then we present the example given by Jun-Yildiz-De in \cite{JYD}. In \cite{JYD}, the authors claimed that the 3-dimensional manifold given in this example is a non-Sasakian locally $\phi$-recurrent $(\kappa, \mu)$-contact metric manifold, which is not locally $\phi$-symmetric. We show that this example is not correct and moreover we prove that there is no non-Sasakian locally $\phi$-recurrent $(\kappa, \mu)$-contact metric  manifold with dimension 3, which is not locally $\phi$-symmetric. We also prove that the only flat 3-dimensional manifolds are $\phi$-recurrent $(k, \mu)$-contact  metric manifolds. Finally, we show that there exists no non-flat $(2n+1)$-dimensional $\phi$-recurrent (locally $\phi$-recurrent) contact metric manifold of constant curvature. This assertion show that Theorem 4.1 in \cite{JYD} is not correct.
\section{Contact Metric Manifolds}
We start by collecting some fundamental material about contact metric geometry. We refer to \cite{B}, \cite{BKP} for further details.

A differentiable $(2n + 1)$-dimensional manifold $M^{2n+l}$ is called a {\it{contact manifold}} if it carries a global differential 1-form $\eta$ such that $\eta\wedge(d\eta)^n\neq 0$ everywhere on $M^{2n+1}$. This form $\eta$ is usually called the {\it{contact form}} of $M^{2n+1}$. It is well known that a contact manifold admits an {\it{almost contact metric structure}} $(\phi, \xi, \eta, g)$, i.e., a global vector field $\xi$, which will be called the {\it{characteristic vector field}}, a (1, 1) tensor field $\phi$ and a Riemannian metric $g$ such that
\begin{align}
&(i)\ \eta(\xi)=1,\ \ \ (ii)\ \phi^2=-Id+\eta\otimes\xi,\label{con}\\
&g(\phi X, \phi Y)=g(X, Y)-\eta(X)\eta(Y),\label{con2}
\end{align}
for any vector fields on $M^{2n+1}$. Moreover, $(\phi, \xi, \eta, g)$ can be chosen such that $d\eta(X, Y)=g(X, \phi Y)$ and we then call the structure a {\it contact metric structure} and the manifold $M^{2n+1}$ carrying such a structure is said to be a {\it contact metric manifold}. As a consequence of (\ref{con}) and (\ref{con2}), we have
\[
\phi\xi=0,\ \ \ \eta\circ\phi=0,\ \ \ d\eta(\xi, X)=0.
\]
Denoting by $\pounds$, Lie differentiation, we define the operator $h$ by following
\[
hX:=\frac{1}{2}(\pounds_\xi\phi)X.
\]
The (1, 1) tensor $h$ is self-adjoint and satisfy
\begin{equation}
(i)\ h\xi=0,\ \ \ (ii)\ h\phi=-\phi h,\ \ \ (iii)\ Tr h=Tr h\phi=0.
\end{equation}
Since  the operator $h$ anti-commutes with $\phi$, if $X$ is an eigenvector of $h$ corresponding to the
eigenvalue $\lambda$, then $\phi X$ is also an eigenvector of $h$ corresponding to the eigenvalue
$-\lambda$.

If $\nabla$ is the Riemannian connection of $g$, then
\begin{align}
&\nabla_X\xi=-\phi X-\phi hX,\label{Kill}\\
&\nabla_\xi\phi=0,\\
&g(R(\xi, X)Y, Z)=g((\nabla_X\phi)Y, Z)+g((\nabla_Z\phi h)Y-(\nabla_Y\phi h)Z, X),\\
&2(\nabla_{hX}\phi)Y=-R(\xi, X)Y-\phi R(\xi, X)\phi Y+\phi R(\xi, \phi X)Y-R(\xi, \phi X)\phi Y\nonumber\\
&\ \ \ \  \ \ \ \ \  \ \ \  \ \ \ \ \ \ +2g(X+hX, Y)\xi-2\eta(Y)(X+hX).
\end{align}

A contact structure on $M^{2n+1}$ gives rise to an almost complex structure on
the product $M^{2n+1}\times R$. If this structure is integrable, then the contact metric
manifold is said to be {\it Sasakian}. Equivalently, a contact metric manifold is Sasakian if and only if
\begin{equation}\label{rep}
R(X, Y)\xi=\eta(Y)X-\eta(X)Y.
\end{equation}
Moreover, on a Sasakian manifold the following hold
\begin{equation}
(\nabla_X\phi)Y=g(X, Y)\phi-\eta(Y)X,\ \ \ \ h=0.
\end{equation}
The $(\kappa, \mu)$-nullity distribution of a contact metric manifold $M^{2n+1}(\phi, \xi, \eta, g)$ for the pair $(\kappa, \mu)\in\mathbb{R}^2$ is a distribution
\begin{align*}
N(\kappa, \mu):p\longrightarrow N_p(\kappa, \mu)=\Big\{Z\in T_pM: R(X, &Y)Z=\kappa\big(g(Y, Z)X-g(X, Z)Y\big)\\
&\ \ +\mu\big(g(Y, Z)hX-g(X, Z)hY\big)\Big\}.
\end{align*}
A contact metric manifold $M^{2n+1}(\phi, \xi, \eta, g)$ with $\xi\in N(\kappa, \mu)$ is called {\it $(\kappa, \mu)$-contact manifold}. So, for a $(\kappa, \mu)$-contact manifold, we have
\begin{equation}\label{esi3}
R(X, Y)\xi=\kappa(\eta(Y)X-\eta(X)Y)+\mu(\eta(Y)hX-\eta(X)hY).
\end{equation}
On a $(\kappa, \mu)$-contact metric manifold, $\kappa\leq 1$. If $\kappa=1$, the structure is Sasakian ($h=0$). For $\kappa=\mu=0$,  we have $R(X, Y)\xi=0$. In \cite{B0}, Blair proved the following.
\begin{theorem}\label{Mohim}{\rm (\cite{B0})}
\emph{A contact metric manifold $M^{2n+1}$ satisfying $R(X, Y)\xi=0$ is locally isometric to $E^{n+1}\times S^n(4)$ for $n>1$ and flat for $n=1$, where $E^{n+1}$ is Euclidean
space and  $S^n(4)$ is a sphere of constant curvature 4.}
\end{theorem}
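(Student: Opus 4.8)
The statement is Blair's theorem, and here is how I would re-derive it from the structure equations collected above. The condition $R(X,Y)\xi=0$ is precisely the $(\kappa,\mu)$-condition $(\ref{esi3})$ with $\kappa=\mu=0$, so $M^{2n+1}$ is a $(0,0)$-contact metric manifold. First I would extract the algebraic consequence for $h$. The curvature symmetry $g(R(X,Y)\xi,Z)=g(R(\xi,Z)X,Y)$ shows that $R(X,Y)\xi=0$ forces $R(\xi,Z)X=0$ for all vector fields, in particular $lX:=R(X,\xi)\xi=0$. Feeding this into the standard relation $h^2=(\kappa-1)\phi^2$ valid on any $(\kappa,\mu)$-manifold (the relation underlying the bound $\kappa\le1$ quoted above) gives, for $\kappa=0$,
\[
h^2=-\phi^2=Id-\eta\otimes\xi .
\]
Thus $h$ annihilates $\xi$ and has eigenvalues $\pm1$ on the contact distribution $\ker\eta$; writing $[1]$ and $[-1]$ for the eigendistributions, the anticommutation relation $h\phi=-\phi h$ recorded above shows $\phi$ interchanges $[1]$ and $[-1]$, whence $\dim[1]=\dim[-1]=n$.

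Next I would read off the behaviour of $\xi$ from $(\ref{Kill})$. For $X\in[-1]$ one gets $\nabla_X\xi=-\phi X-\phi hX=0$, while for $X\in[1]$ one gets $\nabla_X\xi=-2\phi X$, and $\nabla_\xi\xi=0$; hence $\xi$ is parallel along the $(n+1)$-dimensional distribution $D:=[-1]\oplus\langle\xi\rangle$, and $D^{\perp}=[1]$ by self-adjointness of $h$. The heart of the argument is to upgrade this to a splitting: I would prove that $D$ and $D^{\perp}$ are each invariant under $\nabla_Z$ for every $Z$, so that by the local de Rham decomposition theorem $M$ is locally a Riemannian product $M_1^{n+1}\times M_2^{n}$ with $TM_1=D$ and $TM_2=D^{\perp}$. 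For this I would use the covariant derivatives of $h$ and $\phi$: since $R(\xi,\cdot)\cdot=0$, the last structure equation of Section 2 collapses to $(\nabla_{hX}\phi)Y=g(X+hX,Y)\xi-\eta(Y)(X+hX)$, and the standard formula for $\nabla_\xi h$ together with $h^2=-\phi^2$ yields $\nabla_\xi h=0$; combining these with $\nabla_\xi\phi=0$ and $(\ref{Kill})$ controls how $\nabla_Z$ moves the $\pm1$-eigenspaces and gives the required invariance.

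Finally I would identify the two factors. On $M_1$, the relation $R(X,Y)\xi=0$ with $\xi$ parallel along $D$ forces the leaf to be flat, so $M_1$ is locally $E^{n+1}$. On $M_2$, for orthonormal $X,Y\in[1]$ I would compute $g(R(X,Y)Y,X)$ by expanding $R(X,Y)\xi=0$ through $(\ref{Kill})$ with $\nabla_X\xi=-2\phi X$; the factor $2$ squares to produce sectional curvature $4$, so $M_2$ has constant curvature $4$ and is locally $S^n(4)$. When $n=1$ the factor $M_2$ is a curve, hence flat, and the whole product is flat, giving the stated dichotomy.

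The main obstacle is the middle step: verifying that the two eigendistributions are genuinely $\nabla$-parallel and pinning the sectional curvature of $M_2$ to exactly $4$. Both require the covariant-derivative identities for $h$ and $\phi$ and a careful decomposition of every term into its $[1]$, $[-1]$ and $\xi$ components; the conceptual structure (reduce to $h^2=-\phi^2$, split off a parallel $(n+1)$-distribution, identify the factors) is transparent, but the bookkeeping in this computation is where the real work lies.
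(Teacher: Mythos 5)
The paper does not prove this statement: it is imported verbatim from Blair's 1977 paper \cite{B0}, so there is no in-paper proof to compare against. Measured against Blair's actual argument, your skeleton is the right one --- $R(X,Y)\xi=0$ gives $h^2=-\phi^2$, hence eigenvalues $\pm1$ on $\ker\eta$ with $\phi$ swapping the eigendistributions, $\nabla\xi$ vanishes on $[-1]\oplus\langle\xi\rangle$ and equals $-2\phi$ on $[1]$, and a de Rham splitting separates a flat $(n+1)$-dimensional factor from an $n$-dimensional factor of curvature $4$. The parallelism step you flag as "bookkeeping" is genuinely provable from the ingredients you name ($\nabla_\xi h=0$ handles $\nabla_\xi$, and Theorem \ref{THI} with $\lambda=1$ handles $\nabla_Z$ for $Z$ in either eigendistribution, once you note $g(\nabla_ZX,\xi)=-g(X,\nabla_Z\xi)=0$ for $X\in[1]$), so I accept that as a sketch of a correct argument.

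The two real gaps are in the identification of the factors. First, your reason for flatness of $M_1$ --- "$R(X,Y)\xi=0$ with $\xi$ parallel along $D$ forces the leaf to be flat" --- is a non sequitur for $n>1$: a parallel unit field on an $(n+1)$-manifold only splits off a line, leaving an $n$-dimensional $[-1]$-leaf whose flatness still has to be proved. Second, the sectional curvature $4$ on $[1]$ is asserted, not derived; expanding $R(X,Y)\xi=0$ through (\ref{Kill}) produces identities relating $\nabla\phi$, $\nabla h$ and brackets, not the quantity $g(R(X,Y)Y,X)$ for $X,Y\in[1]$, and "the factor $2$ squares" is a heuristic. Both gaps close immediately if you instead invoke the curvature formulas (\ref{31}) and (\ref{310}) quoted in the paper: with $\kappa=\mu=0$ and $\lambda=1$ the coefficients are $2(1+\lambda)-\mu=4$ on $D(\lambda)=[1]$ and $2(1-\lambda)-\mu=0$ on $D(-\lambda)=[-1]$, which together with total geodesy of the leaves pins the factors to $S^n(4)$ and $E^{n+1}$ exactly (and for $n=1$ both factors are at most $2$-dimensional with a parallel field or a curve, so the product is flat). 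As written, your proposal is an accurate roadmap with the two decisive computations missing or misjustified.
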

In a $(\kappa, \mu)$-contact metric manifold, the following relations hold (see \cite{B}, \cite{BKP}, \cite{JYD})
\begin{align}
&h^2=(\kappa-1)\phi^2,\ \ \ \kappa\leq 1, \ \  \text{and} \ \kappa=1 \ \text{iff}\  M^{2n+1}\  \text{is Sasakian},\\
&(\nabla_X\eta)(Y)=g(X+hX, \phi Y),\ \ \ S(X, \xi)=2n\kappa\eta(X),\label{rep1}\\
&(\nabla_X\phi)(Y)=g(X+hX, Y)\xi-\eta(Y)(X+hX),\\
&R(\xi, X)Y=\kappa(g(X, Y )\xi-\eta(Y)X)+\mu(g(hX,Y)\xi-\eta(Y)hX),\\
&\eta(R(X, Y)Z)=\kappa(g(Y, Z)\eta(X)-g(X, Z)\eta(Y))\nonumber\\
&\ \  \  \ \ \ \ \ \ \  \ \ \ \ \  \ \ \ \   +\mu(g(hY, Z)\eta(X)-g(hX, Z)\eta(Y))\label{rep2}.
\end{align}
\begin{defn}
\emph{A contact metric manifold $M^{2n+1} (\phi, \xi, \eta, g)$ is said to be locally $\phi$-symmetric if
\begin{equation}
\phi^2((\nabla_WR)(X, Y)Z)=0,\label{a1}
\end{equation}
for all vector fields W, X, Y, Z orthogonal to $\xi$. If (\ref{a1}) holds for all vector fields $W$, $X$, $Y$, $Z$ (not necessarily orthogonal to $\xi$), then we call it $\phi$-symmetric.}
\end{defn}
The notion locally $\phi$-symmetric, was introduced for Sasakian manifolds by Takahashi \cite{T}.
\begin{defn}\label{hasan6}
\emph{A contact metric manifold $M^{2n+1} (\phi, \xi, \eta, g)$ is said to be $\phi$-recurrent if there exists a non-zero 1-form $A$ such that
\begin{equation}\label{rec}
\phi^2((\nabla_WR)(X, Y)Z)=A(W)R(X, Y)Z,
\end{equation}
for all vector fields $X, Y, Z, W$. If the above equation holds for all vector fields W, X, Y, Z orthogonal to $\xi$, then we call it locally $\phi$-recurrent.}
\end{defn}
These notations were introduced for Sasakian manifolds by De-Shaikh-Biswas \cite{DSB} and were introduced for $(\kappa, \mu)$-contact manifolds by Jun-Yildiz-De \cite{JYD}.
\section{Existence of $\phi$-Recurrent Sasakian Manifold}
In \cite{DSB}, De-Shaikh-Biswas introduced the notation Sasakian $\phi$-recurrent contact metric manifold. In this section, we show that there exists no contact metric manifold of this type. Therefore this definition is not well defined.

For a Sasakian manifold,  we have $h=0$ and $\kappa=1$. Then using (\ref{Kill}), (\ref{rep1}) and  (\ref{rep2}),  we have the following.
\begin{lemma}
Let $M^{2n+1}(\phi, \xi, \eta, g)$ be a Sasakian manifold. Then the following relations hold
\begin{align}
&\nabla_X\xi=-\phi X,\label{a2}\\
&(\nabla_X\eta)(Y)=g(X, \phi Y),\label{Sa}\\
&\eta(R(X, Y)Z)=g(Y, Z)\eta(X)-g(X, Z)\eta(Y),\label{a3}\\
&S(X, \xi)=2n\eta(X).\label{Sa2}
\end{align}
\end{lemma}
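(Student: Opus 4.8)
The plan is to obtain all four identities by direct substitution of the two defining Sasakian specializations $h=0$ and $\kappa=1$ into the general $(\kappa,\mu)$-contact metric relations already recorded in the previous section. No new geometric input is required: the content of the lemma is simply that each of the ambient formulas collapses in a controlled way once these two conditions are imposed, so the work consists of tracking which summands survive.

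Concretely, I would argue term by term. For (\ref{a2}) I start from (\ref{Kill}), that is $\nabla_X\xi=-\phi X-\phi hX$; since $h=0$ gives $hX=0$ and hence $\phi hX=0$, the second summand drops out and $\nabla_X\xi=-\phi X$ remains. For (\ref{Sa}) I use the first identity in (\ref{rep1}), namely $(\nabla_X\eta)(Y)=g(X+hX,\phi Y)$, where again $hX=0$ reduces the first argument of $g$ to $X$, yielding $(\nabla_X\eta)(Y)=g(X,\phi Y)$. For (\ref{a3}) I invoke (\ref{rep2}): putting $\kappa=1$ turns the first bracket into $g(Y,Z)\eta(X)-g(X,Z)\eta(Y)$, while $h=0$ annihilates both $g(hY,Z)$ and $g(hX,Z)$, so the entire $\mu$-term vanishes. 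Finally (\ref{Sa2}) is immediate from the relation $S(X,\xi)=2n\kappa\eta(X)$ in (\ref{rep1}) after substituting $\kappa=1$.

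Since every step is a substitution into a previously established formula, there is no genuine obstacle here. The only point that warrants a moment's care is the treatment of the $\mu$-contribution in (\ref{a3}): it disappears because $h=0$ kills both of its factors, not because of any hypothesis on $\mu$, so the identity holds for a Sasakian manifold regardless of which $(\kappa,\mu)$-family it is regarded as belonging to.
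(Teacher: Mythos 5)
Your proposal is correct and coincides with the paper's own (implicit) argument: the lemma is stated there precisely as the specialization of (\ref{Kill}), (\ref{rep1}) and (\ref{rep2}) to the Sasakian case $h=0$, $\kappa=1$. Nothing further is needed.
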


Here,  we consider the contact metric manifolds with dimension 3. It is known that the Riemannian curvature of a 3-dimensional Riemannian manifold $M$ satisfies in
\begin{align}\label{N50}
R(X, Y)Z&=g(Y, Z)QX-g(X, Z)QY+S(Y, Z)X-S(X, Z)Y\nonumber\\
&\ \ \ \ \  +\frac{r}{2}[g(X, Z)Y-g(Y, Z)X],
\end{align}
where $Q$ is the Ricci operator, that is , $g(QX, Y)=S(X, Y)$ and $r$ is the scalar curvature of $M$.
\begin{theorem}\label{SRE}
There is no $\phi$-recurrent Sasakian manifold with dimension 3.
\end{theorem}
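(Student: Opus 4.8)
The plan is to start from the defining recurrence relation (\ref{rec}) and specialize it cleverly by contracting with $\xi$ and using the three-dimensional curvature formula (\ref{N50}) together with the Sasakian identities collected in the Lemma. Since a Sasakian manifold has $\kappa=1$ and $h=0$, equation (\ref{esi3}) reduces to (\ref{rep}), and the Ricci relation (\ref{Sa2}) gives $S(X,\xi)=2n\eta(X)$; in dimension 3 this means $n=1$, so $S(X,\xi)=2\eta(X)$. The goal is to derive a contradiction from the assumption that a nonzero recurrence $1$-form $A$ exists.

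First I would take the inner product of (\ref{rec}) with $\xi$. Because $\phi^2 = -\mathrm{Id}+\eta\otimes\xi$, the left-hand side becomes $g(\phi^2((\nabla_W R)(X,Y)Z),\xi)=-g((\nabla_W R)(X,Y)Z,\xi)+\eta((\nabla_W R)(X,Y)Z)\eta(\xi)=0$, using $\eta(\xi)=1$. So contracting the left side with $\xi$ annihilates it, while the right side yields $A(W)\,\eta(R(X,Y)Z)$. Hence I obtain $A(W)\,\eta(R(X,Y)Z)=0$ for all $W,X,Y,Z$. Using the Sasakian identity (\ref{a3}), $\eta(R(X,Y)Z)=g(Y,Z)\eta(X)-g(X,Z)\eta(Y)$, which is certainly not identically zero (take $Y=Z$ a unit vector orthogonal to $\xi$ and $X=\xi$). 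Therefore $A(W)=0$ for all $W$, contradicting the requirement that $A$ be a nonzero $1$-form.

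The main technical step to verify carefully is that $g((\nabla_W R)(X,Y)Z,\xi)$ really drops out after the $\phi^2$ contraction, i.e.\ that no $\eta\otimes\xi$ term survives; this follows immediately from $\eta(\xi)=1$ and the algebra of $\phi^2$, so it is essentially bookkeeping rather than a genuine obstacle. An alternative but equivalent route would be to differentiate (\ref{a3}) covariantly to get an explicit formula for $\eta((\nabla_W R)(X,Y)Z)$ in terms of $\nabla_W\eta = g(W,\phi\,\cdot\,)$ by (\ref{Sa}), and then feed this into the $\xi$-contracted recurrence; this gives the same conclusion but requires more computation.

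The expected difficulty is not in the logic but in making sure the contraction with $\xi$ is handled consistently with the sign conventions in (\ref{con}) and (\ref{con2}); once that is pinned down, the argument is short. The upshot is that the recurrence forces $A\equiv 0$, so no genuinely $\phi$-recurrent (with nonzero $A$) three-dimensional Sasakian manifold can exist, which is exactly the assertion of Theorem~\ref{SRE}.
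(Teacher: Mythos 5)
Your argument is correct, and it is genuinely different from (and considerably shorter than) the paper's proof. The paper works hard to exploit the dimension-three hypothesis: it uses the three-dimensional curvature decomposition (\ref{N50}) together with (\ref{rep}) and (\ref{Sa2}) to pin down $Q$, $S$ and hence $R$ explicitly as in (\ref{a6}), differentiates that expression to get (\ref{Im}), specializes to $X=Z=\xi$ and $Y\perp\xi$ to show $(\nabla_WR)(\xi,Y)\xi=0$, and only then invokes the recurrence to get $A(W)R(\xi,Y)\xi=-A(W)Y=0$. You instead contract the recurrence identity (\ref{rec}) with $\xi$: since $\eta\circ\phi=0$ (equivalently, by the computation $g(\phi^2V,\xi)=-\eta(V)+\eta(V)\eta(\xi)=0$ using $g(V,\xi)=\eta(V)$ from (\ref{con2})), the left-hand side vanishes identically for \emph{all} arguments, so $A(W)\,\eta(R(X,Y)Z)=0$; the Sasakian identity (\ref{a3}) with $X=\xi$ and $Y=Z$ a unit vector orthogonal to $\xi$ then forces $A\equiv0$. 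Note that the references to (\ref{N50}) and $S(X,\xi)=2\eta(X)$ in your opening paragraph are never actually used, and indeed nothing in your argument uses $\dim M=3$: you have in effect proved Theorem~\ref{5.4} (nonexistence in every dimension) in one stroke, subsuming both Theorem~\ref{SRE} and Theorem~\ref{SRE1}. What the paper's longer route buys is the explicit curvature and $\nabla R$ formulas (\ref{a6}) and (\ref{Im}), which are of some independent interest; what your route buys is brevity, dimension-independence, and the isolation of the one structural fact that matters, namely that $\eta(R(X,Y)Z)$ cannot vanish identically on a Sasakian manifold while $\phi^2$ of anything is automatically $\eta$-null.
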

\begin{proof}
Let $M$ be a 3-dimensional $\phi$-recurrent Sasakian manifold. Then the Riemannian curvature of this manifold satisfies in (\ref{N50}). Putting $Z=\xi$ in (\ref{N50}) and using (\ref{Sa2}) and $\eta(\xi)=1$, we obtain
\be
R(X, Y)\xi=(2-\frac{r}{2})[\eta(Y)X-\eta(X)Y]+\eta(Y)QX-\eta(X)QY.\label{a4}
\ee
Then  (\ref{rep}) and (\ref{a4}) give us
\be
(1-\frac{r}{2})[\eta(Y)X-\eta(X)Y]=\eta(X)QY-\eta(Y)QX.\label{a5}
\ee
Setting $Y=\xi$ in (\ref{a5}) and using  (\ref{Sa2}), we get
\begin{equation}\label{Na1}
QX=(\frac{r}{2}-1)X+(3-\frac{r}{2})\eta(X)\xi,
\end{equation}
which gives us
\begin{equation}\label{Na2}
S(X, Y)=g(QX, Y)=(\frac{r}{2}-1)g(X, Y)+(3-\frac{r}{2})\eta(X)\eta(Y).
\end{equation}
By (\ref{N50}), (\ref{Na1}) and (\ref{Na2}), it follows that
\begin{align}
R(X, Y)Z&=(3-\frac{r}{2})[g(Y, Z)\eta(X)\xi-g(X, Z)\eta(Y)\xi+\eta(Y)\eta(Z)X\nonumber\\
&\ \ \ -\eta(X)\eta(Z)Y]+(\frac{r}{2}-2)[g(Y, Z)X-g(X, Z)Y].\label{a6}
\end{align}
From (\ref{a6}) and
\begin{align}\label{FS}
(\nabla_WR)(X, Y)Z&=\nabla_WR(X, Y)Z-R(\nabla_WX, Y)Z\nonumber\\
&\ \ \ -R(X, \nabla_WY)Z-R(X, Y)\nabla_WZ,
\end{align}
we get
\begin{align}
(\nabla_WR)(X, Y)Z&=\frac{dr(W)}{2}[g(Y, Z)X - g(X, Z)Y - g(Y, Z)\eta(X)\xi\nonumber\\
&\ \ + g(X, Z)\eta(Y)\xi - \eta(Y)\eta(Z)X+\eta(X)\eta(Z)Y]\nonumber\\
&\ \ + (3-\frac{r}{2})[g(Y, Z)\eta(X) - g(X, Z)\eta(Y)]\nabla_W\xi\nonumber\\
&\ \ + (3-\frac{r}{2})[\eta(Y)X - \eta(X)Y](\nabla_W\eta)(Z)\nonumber\\
&\ \ + (3-\frac{r}{2})[g(Y, Z)\xi - \eta(Z)Y](\nabla_W\eta)(X)\nonumber\\
&\ \ - (3-\frac{r}{2})[g(X, Z)\xi - \eta(Z)X](\nabla_W\eta)(Y).\label{Im}
\end{align}
Now, let $Y$ be a non-zero vector field orthogonal to $\xi$ and $X=Z=\xi$. Then from (\ref{Im}), we have
\begin{equation}\label{sa}
(\nabla_WR)(\xi, Y)\xi=-2(3-\frac{r}{2})(\nabla_W\eta)(\xi)Y.
\end{equation}
Since $\phi\xi=0$, then using (\ref{Sa}) we obtain
\begin{equation}\label{sa1}
(\nabla_W\eta)(\xi)=g(W, \phi\xi)=0.
\end{equation}
Setting (\ref{sa1}) in (\ref{sa}) yields
\begin{equation}\label{sa2}
(\nabla_WR)(\xi, Y)\xi=0.
\end{equation}
Since $M$ is a $\phi$-recurrent manifold then  there exists a non-zero 1-form $A$ such that satisfies in (\ref{rec}). Thus using (\ref{rec}) and (\ref{sa2}) we deduce that
\begin{equation}\label{sa3}
A(W)R(\xi, Y)\xi=0.
\end{equation}
Since $M$ is Sasakian manifold and $Y$ is a non-zero vector field orthogonal to $\xi$, then we have
\begin{equation}\label{sa4}
R(\xi, Y)\xi=\eta(Y)\xi-\eta(\xi)Y=-Y.
\end{equation}
Setting (\ref{sa4}) in (\ref{sa3}) implies that $A(W)Y=0$, which contradicts with the condition $A(W)\neq 0$.
\end{proof}

\bigskip

\begin{theorem}\label{SRE1}
There is no $\phi$-recurrent Sasakian manifold $M^{2n+1}$ with $n>1$.
\end{theorem}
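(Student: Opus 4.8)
The plan is to imitate the proof of Theorem \ref{SRE}, but to circumvent the three-dimensional curvature decomposition (\ref{N50}), which is unavailable once $n>1$. What that proof really used was the value of the $\phi$-recurrence condition (\ref{rec}) on the triple $(X,Y,Z)=(\xi,Y,\xi)$ with $Y$ orthogonal to $\xi$; so I would try to establish directly, in any dimension, that $(\nabla_W R)(\xi,Y)\xi=0$ for all $W$ and all such $Y$, relying only on the dimension-free Sasakian identities of the Lemma together with (\ref{rep}).

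First I would compute the tensor $(\nabla_W R)(X,Y)\xi$ for arbitrary $X,Y$. Expanding by (\ref{FS}) and inserting the Sasakian relations $R(X,Y)\xi=\eta(Y)X-\eta(X)Y$ (from (\ref{rep})), $\nabla_W\xi=-\phi W$ (\ref{a2}) and $(\nabla_W\eta)(Y)=g(W,\phi Y)$ (\ref{Sa}), I expect every term carrying a factor $\eta(\nabla_W X)$ or $\eta(\nabla_W Y)$ to cancel, leaving the clean expression
\[
(\nabla_W R)(X,Y)\xi=g(W,\phi Y)X-g(W,\phi X)Y+R(X,Y)\phi W .
\]
Putting $X=\xi$ and using $\phi\xi=0$ and the Sasakian identity $R(\xi,Y)Z=g(Y,Z)\xi-\eta(Z)Y$ (so that $R(\xi,Y)\phi W=g(Y,\phi W)\xi$, since $\eta\circ\phi=0$), the surviving terms are $g(W,\phi Y)\xi+g(Y,\phi W)\xi$, and these cancel because $\phi$ is skew-adjoint for $g$ (the form $g(X,\phi Y)=d\eta(X,Y)$ being antisymmetric in $X,Y$). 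This gives $(\nabla_W R)(\xi,Y)\xi=0$ with no appeal to the dimension.

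The conclusion is then immediate and runs exactly as at the end of Theorem \ref{SRE}: applying $\phi^2$ and invoking (\ref{rec}) yields $0=\phi^2\big((\nabla_W R)(\xi,Y)\xi\big)=A(W)\,R(\xi,Y)\xi$, whereas (\ref{sa4}) gives $R(\xi,Y)\xi=-Y\neq0$ for $Y$ orthogonal to $\xi$; hence $A(W)=0$ for every $W$, contradicting the assumption $A\neq0$.

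I expect the only delicate point to be the bookkeeping in the computation of $(\nabla_W R)(X,Y)\xi$, namely verifying that the $\eta(\nabla_W X)$- and $\eta(\nabla_W Y)$-terms really do cancel. This can in fact be bypassed entirely: applying $\eta$ to both sides of (\ref{rec}) and using $\eta\circ\phi^2=0$ gives $A(W)\,\eta(R(X,Y)Z)=0$ for all $X,Y,Z,W$, while (\ref{a3}) shows $\eta(R(X,Y)Z)$ is not identically zero (take $X=\xi$ and $Y=Z$ a unit field orthogonal to $\xi$, giving the value $1$). This delivers the same contradiction in a single line and uniformly in $n$, and I would probably record it as the actual proof, keeping the explicit computation above only as motivation.
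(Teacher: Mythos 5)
Your proposal is correct, and both of your arguments are genuinely different from the paper's. The paper proves this theorem by substituting $\phi^2=-\mathrm{Id}+\eta\otimes\xi$ into (\ref{rec}), invoking the second Bianchi identity to obtain $A(W)R(X,Y)Z+A(X)R(Y,W)Z+A(Y)R(W,X)Z=0$, applying $\eta$ and (\ref{a3}), and then killing $A$ component by component on a $\phi$-basis $\{e_i,\phi e_i,\xi\}$ (this is where the hypothesis $n>1$ enters, since the basis substitution needs $j\neq i$), with a separate computation for $A(\xi)$. Your first route sharpens a step the paper itself performs: the paper derives exactly your formula $(\nabla_WR)(X,Y)\xi=g(W,\phi Y)X-g(W,\phi X)Y+R(X,Y)\phi W$ as its equation (\ref{a8}) but only evaluates it at $X=W=\xi$; you instead set only $X=\xi$ and observe that $g(W,\phi Y)\xi+g(Y,\phi W)\xi=0$ by skew-adjointness of $\phi$, giving $(\nabla_WR)(\xi,Y)\xi=0$ for \emph{all} $W$ and hence $A(W)Y=0$ directly, with no Bianchi identity, no basis, and no restriction on $n$. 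Your second route ($\eta\circ\phi^2=0$ applied to (\ref{rec}), combined with $\eta(R(\xi,Y)Y)=1$ for unit $Y\perp\xi$) is even shorter and equally valid. Both of your arguments are uniform in $n\geq 1$, so they also subsume Theorem \ref{SRE} and render the paper's split into the cases $n=1$ and $n>1$ unnecessary; what the paper's longer route buys is the intermediate cyclic identity (\ref{eta}), which it recycles later in the non-Sasakian $(\kappa,\mu)$ setting where $\eta(R(X,Y)Z)$ can vanish identically and your shortcut would give no information.
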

\begin{proof}
Let $M^{2n+1}$ $(n>1)$, be a $\phi$-recurrent Sasakian manifold. Then using (ii) of (\ref{con}) and (\ref{rec}), we get
\[
-(\nabla_WR)(X, Y)Z+\eta((\nabla_WR)(X, Y)Z)\xi=A(W)R(X, Y)Z,
\]
or
\begin{equation}\label{Bi}
(\nabla_WR)(X, Y)Z=\eta((\nabla_WR)(X, Y)Z)\xi-A(W)R(X, Y)Z,
\end{equation}
where $X, Y, Z, W$ are arbitrary vector fields on $M$ and $A$ is a non-zero 1-form on $M$.
Using Bianchi identity
\[
(\nabla_WR)(X, Y)Z+(\nabla_XR)(Y, W)Z+(\nabla_YR)(W, X)Z=0,
\]
in (\ref{Bi}) implies that
\[
A(W)R(X, Y)Z+A(X)R(Y, W)Z+A(Y)R(W, X)Z=0.
\]
Applying $\eta$ to the above equation yields
\begin{equation}\label{eta}
A(W)\eta(R(X, Y)Z)+A(X)\eta(R(Y, W)Z)+A(Y)\eta(R(W, X)Z)=0.
\end{equation}
By plugging  (\ref{a3}) in (\ref{eta}), it follows that
\begin{align}\label{SR}
&A(W)[g(Y, Z)\eta(X)-g(X, Z)\eta(Y)]+A(X)[g(W, Z)\eta(Y)-g(Z, Y)\eta(W)]\nonumber\\
& \ \ \ \ \ \ \ \ +A(Y)[g(X, Z)\eta(W)-g(W, Z)\eta(X)]=0.
\end{align}
Now, we choose the $\phi$-basis $\big\{e_i, \phi e_i, \xi\big\}_{i=1}^n$ for $M^{2n+1}$ $(n>1)$. By setting $Y=Z=e_i$, $W=e_j$ $(j\neq i)$ and $X=\xi$ in (\ref{SR}),  we obtain
\[
A(e_j)=0.
\]
Since $j$ is arbitrary, then we deduce
\begin{equation}\label{SR1}
A(e_k)=0,\ \ \ \forall\ k=1,\ldots,n.
\end{equation}
Similarly, setting $Y=Z=e_i$, $W=\phi e_j$ and $X=\xi$ in (\ref{SR}) implies
\[
A(\phi e_j)=0.
\]
Thus we deduce
\begin{equation}\label{SR2}
A(\phi e_k)=0,\ \ \ \forall\ k=1,\ldots,n.
\end{equation}
(\ref{rep}) and (\ref{Sa}) give us
\begin{align}
\nonumber(\nabla_WR)(X, Y)\xi&=(\nabla_W\eta)(Y)X-(\nabla_W\eta)(X)Y+R(X, Y)\phi W\\
&=g(W, \phi Y)X-g(W, \phi X)Y+R(X, Y)\phi W.\label{a8}
\end{align}
Putting $X=W=\xi$ in (\ref{a8}) and using $\phi\xi=0$ and $g(\xi, \phi Y)=\eta(\phi Y)=0$, we get
\[
(\nabla_{\xi}R)(\xi, Y)\xi=0.
\]
Thus from (\ref{rec}) we derive that
\[
0=A(\xi)R(\xi, Y)\xi=A(\xi)[\eta(Y)\xi-Y].
\]
If $Y$ is a non-zero vector field orthogonal to $\xi$, then the above equation give us $A(\xi)=0$. Thus by using (\ref{SR}) and (\ref{SR1}),  we deduce that $A=0$ on $M$, which is a contradiction.
\end{proof}
\bigskip

By Theorems \ref{SRE} and \ref{SRE1},  we conclude the following.

\begin{theorem}\label{5.4}
There exists no $\phi$-recurrent Sasakian manifold.
\end{theorem}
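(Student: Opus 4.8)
The plan is to observe that this statement is purely a combination of the two preceding theorems, so the only real content is verifying that their hypotheses exhaust all cases. A Sasakian manifold, being a contact metric manifold, has odd dimension $2n+1$ for some integer $n\geq 1$; there is no other possibility. Hence any putative $\phi$-recurrent Sasakian manifold $M$ falls into exactly one of the two mutually exclusive and jointly exhaustive cases $n=1$ (dimension $3$) and $n>1$.

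First I would dispose of the case $n=1$. Here $M$ is a $3$-dimensional $\phi$-recurrent Sasakian manifold, and Theorem \ref{SRE} asserts directly that no such manifold exists. Next I would treat the case $n>1$, where $M$ is a $(2n+1)$-dimensional $\phi$-recurrent Sasakian manifold with $n>1$; Theorem \ref{SRE1} asserts that no such manifold exists either. Since every dimension of a Sasakian manifold is covered by one of these two theorems, the assumed existence of a $\phi$-recurrent Sasakian manifold of any dimension leads to a contradiction, which completes the argument.

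There is essentially no obstacle in this final step: all the substantive work—the curvature computations using \eqref{N50}, \eqref{rep}, \eqref{Sa2} in the three-dimensional case, and the Bianchi-identity argument together with the $\phi$-basis evaluation of \eqref{SR} in the higher-dimensional case—has already been carried out in Theorems \ref{SRE} and \ref{SRE1}. The only point meriting explicit mention is that the dimension is necessarily odd and at least three, so that the disjunction $n=1$ or $n>1$ genuinely covers all admissible Sasakian manifolds and no dimension slips through.
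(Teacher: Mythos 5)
Your proposal is correct and coincides with the paper's own argument: the paper derives Theorem \ref{5.4} directly as the combination of Theorems \ref{SRE} and \ref{SRE1}, exactly as you do, with the case split on the dimension $2n+1$ being $n=1$ versus $n>1$. Your extra remark that these two cases exhaust all admissible dimensions of a Sasakian manifold is the only point needing verification, and it is the same implicit observation the paper relies on.
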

\section{$\phi$-Recurrent $(\kappa, \mu)$-Contact Metric Manifolds}
In \cite{JYD}, Jun-Yildiz-De presented the following example (see Section 5 in \cite{JYD}).
\begin{example}\label{Ex}{\rm(\cite{JYD})}
We consider 3-dimensional manifold $M=\{(x, y, z)| x\neq 0\}$, where $(x, y, z)$ are the
standard coordinates in $\mathbb{R}^3$. Let $\{e_1, e_2, e_3\}$ be linearly independent global frame on $M$ given by
\[
e_1:=\frac{2}{x}\frac{\partial}{\partial y},\ \ e_2:=2\frac{\partial}{\partial x}-\frac{4z}{x}\frac{\partial}{\partial y}+xy\frac{\partial}{\partial z},\ \ e_3:=\frac{\partial}{\partial z}.
\]
Let $g$ be the Riemannian metric defined by
\[ g(e_1, e_3)=g(e_2, e_3)=g(e_1, e_2)=0,\ \ g(e_1, e_1)=g(e_2, e_2)=g(e_3, e_3)=0.
\]
Let $\eta$ be the 1-form defined by $\eta(U)=g(U, e_3)$ for any $U\in\chi(M)$. Suppose that $\phi$ be
the (1, 1) tensor field defined by
\[
\phi e_1=e_2, \ \ \phi e_2=-e_1, \ \ \phi e_3=0.
\]
Then using the linearity of $\phi$ and $g$,  we have
\begin{align}
&\eta(e_3)=1, \ \ \ \  \phi^2(U)=-U+\eta(U)e_3,\\
&g(\phi U, \phi W)=g(U, W)-\eta(U)\eta(W) ,
\end{align}
for any $U, W\in\chi(M)$. Moreover
\[
he_1=-e_1, \ \ he_2=e_2, \ \ he_3=0.
\]
Thus for $e_3=\xi$, \ \  $(\phi, \xi, \eta, g)$ defines
a contact metric structure on $M$. Hence we have
\[
[e_1, e_2]=2e_3+\frac{2}{x}e_1,\ \  [e_1, e_3]=0,\ \  [e_2, e_3]=2e_1.
\]
The Riemannian connection $\nabla$ of the metric $g$ is given by
\begin{align}\label{Lev}
2g(\nabla_XY, Z)&=Xg(Y, Z)+Yg(Z, X)-Zg(X, Y)\nonumber\\
&\ \ \ -g(X, [Y, Z])-g(Y, [X, Z])+g(Z, [X, Y]).
\end{align}
Taking $e_3=\xi$ and using the above formula for Riemannian metric $g$, it can
be easily calculated that
\begin{equation} \label{hasan}
\left\{
\begin{array}{cc}
\nabla_{e_1}e_3&\hspace{-.4cm}=0,\ \ \ \ \ \ \ \nabla_{e_2}e_3=2e_1,\ \ \nabla_{e_3}e_3=0, \ \ \nabla_{e_1}e_2=\frac{2}{x}e_1\\
\nabla_{e_2}e_1&\!\!\!\!=-2e_3,\ \ \nabla_{e_2}e_2=0,\ \ \nabla_{e_3}e_2=0,\ \ \nabla_{e_1}e_1=-\frac{2}{x}e_2.
\end{array}
\right.
\end{equation}
By (\ref{hasan}), it is easy to see that $(\phi, \xi, \eta, g)$ is a $(\kappa, \mu)$-contact metric
manifold with $\kappa=-\frac{2}{x}\neq 0$ and $\mu=-\frac{2}{x}\neq 0$.
\end{example}
Now we show that the above example is not correct.\\
Using (\ref{hasan}) we obtain
\begin{align}\label{hasan1}
R(e_1, e_2)e_3&=\nabla_{e_1}\nabla_{e_2}e_3-\nabla_{e_2}\nabla_{e_1}e_3-\nabla_{[e_1, e_2]}e_3\nonumber\\
&=2\nabla_{e_1}e_1-2\nabla_{e_3}e_3-\frac{2}{x}\nabla_{e_1}e_3\nonumber\\
&=-\frac{4}{x}e_2.
\end{align}
But we have
\begin{equation}\label{hasan2}
R(e_1, e_2)e_3=\kappa(\eta(e_2)e_1-\eta(e_1)e_2)+\mu(\eta(e_2)he_1-\eta(e_1)he_2)=0,
\end{equation}
because $\eta(e_1)=g(e_1, e_3)=0$ and $\eta(e_2)=g(e_2, e_3)=0$. Thus (\ref{hasan2}) contradicts (\ref{hasan1}).

\bigskip

In \cite{BKP}, Blair-Koufogiorgos-Papantoniou proved the following.
\begin{lemma}\label{lemma}{\rm (\cite{BKP})}
\emph{Let $M^3$ be a three-dimensional $(\kappa, \mu)$-contact metric manifold and $X$ be a unit eigenvector of $h$, say $hX=\lambda X$, $X$ orthogonal to $\xi$, where $\lambda=\sqrt{1-\kappa}$. Then for $\kappa<1$, we have}
\begin{align*}
[\xi, X]&=(1+\lambda-\frac{\mu}{2})\phi X,\ \ [\phi X, \xi]=(1-\lambda-\frac{\mu}{2})X, \ \ [X, \phi X]=2\xi,\\
\nabla_XX&=\nabla_{\phi X}\phi X=0,\ \ \nabla_X\phi X=(\lambda+1)\xi,\ \ \nabla_{\phi X}X=(\lambda-1)\xi,\\
\nabla_X\xi&=-(1+\lambda)\phi X,\ \ \nabla_\xi X=-\frac{1}{2}\mu\phi X.
\end{align*}
\end{lemma}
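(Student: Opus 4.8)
The plan is to compute every covariant derivative and Lie bracket in the orthonormal frame $\{X,\phi X,\xi\}$, using that $h\phi=-\phi h$ forces $h\phi X=-\lambda\phi X$, so this frame diagonalizes $h$ with eigenvalues $\lambda,-\lambda,0$, and that $\kappa,\mu$ (hence $\lambda=\sqrt{1-\kappa}$) are constant. The derivatives of $\xi$ come for free from (\ref{Kill}): substituting $hX=\lambda X$, $h\phi X=-\lambda\phi X$, $h\xi=0$ and using $\phi^2X=-X$ from (ii) of (\ref{con}) gives $\nabla_X\xi=-(1+\lambda)\phi X$, $\nabla_{\phi X}\xi=(1-\lambda)X$, and $\nabla_\xi\xi=0$.

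Next I would pin down the remaining derivatives up to scalars. Metric compatibility in the orthonormal frame makes the connection coefficients antisymmetric, so from the three derivatives of $\xi$ I read off the $\xi$-components of $\nabla_X\phi X$ and $\nabla_{\phi X}X$ (namely $1+\lambda$ and $\lambda-1$) and check that $\nabla_XX$, $\nabla_{\phi X}\phi X$, $\nabla_\xi X$ are $\xi$-free (e.g. $g(\nabla_XX,\xi)=-g(X,\nabla_X\xi)=(1+\lambda)g(X,\phi X)=0$). Feeding the eigenvector data into the $(\kappa,\mu)$-identity $(\nabla_Y\phi)Z=g(Y+hY,Z)\xi-\eta(Z)(Y+hY)$ then expresses the $\phi$-derivatives in terms of $\nabla_XX$, $\nabla_{\phi X}X$, $\nabla_\xi X$. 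After this bookkeeping everything reduces to three unknown scalars $a=g(\nabla_XX,\phi X)$, $a'=g(\nabla_{\phi X}\phi X,X)$, $c=g(\nabla_\xi X,\phi X)$, with target values $a=a'=0$ and $c=-\tfrac{\mu}{2}$.

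The heart of the argument is to determine these scalars, and here I would let the curvature do the work. Computing $R(X,\phi X)\xi$ from the expressions above and comparing with (\ref{esi3}) — which forces $R(X,\phi X)\xi=0$ since $\eta(X)=\eta(\phi X)=0$ — yields an identity of the form $-2\lambda a\,\phi X+2\lambda a'X=0$; as $\lambda\neq0$ for $\kappa<1$, this gives $a=a'=0$, hence $\nabla_XX=\nabla_{\phi X}\phi X=0$, $\nabla_X\phi X=(1+\lambda)\xi$, and $\nabla_{\phi X}X=(\lambda-1)\xi$. Similarly, computing $R(\xi,X)X$ and comparing with the $(\kappa,\mu)$-identity $R(\xi,X)Y=\kappa(g(X,Y)\xi-\eta(Y)X)+\mu(g(hX,Y)\xi-\eta(Y)hX)$, whose value at $Y=X$ is $(\kappa+\mu\lambda)\xi$, produces (after using $\lambda^2=1-\kappa$) the relation $-2\lambda c=\mu\lambda$, hence $c=-\tfrac{\mu}{2}$ and $\nabla_\xi X=-\tfrac{\mu}{2}\phi X$.

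Finally the three brackets fall out of $[U,V]=\nabla_UV-\nabla_VU$: $[\xi,X]=\nabla_\xi X-\nabla_X\xi=(1+\lambda-\tfrac{\mu}{2})\phi X$, then $[\phi X,\xi]=\nabla_{\phi X}\xi-\nabla_\xi\phi X=(1-\lambda-\tfrac{\mu}{2})X$ using $\nabla_\xi\phi X=\phi\nabla_\xi X=\tfrac{\mu}{2}X$, and $[X,\phi X]=\nabla_X\phi X-\nabla_{\phi X}X=2\xi$ (consistent with $\eta([X,\phi X])=-2\,d\eta(X,\phi X)=2$). The main obstacle is precisely the step fixing $a,a',c$: the purely algebraic and metric relations leave these tangential connection coefficients undetermined, and only the nullity condition on the curvature, together with the constancy of $\kappa,\mu$, removes the ambiguity — so the crux is organizing the two curvature computations $R(X,\phi X)\xi$ and $R(\xi,X)X$ cleanly rather than any single hard estimate.
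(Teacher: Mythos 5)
Your argument is correct and complete. Note, however, that the paper itself gives no proof of this lemma: it is imported verbatim from Blair--Koufogiorgos--Papantoniou \cite{BKP}, so there is no in-paper proof to compare against. Your derivation is the natural one and matches the standard route: the frame $\{X,\phi X,\xi\}$ diagonalizes $h$, equation (\ref{Kill}) gives the derivatives of $\xi$, metric compatibility reduces the remaining connection coefficients to the three scalars $a=g(\nabla_XX,\phi X)$, $a'=g(\nabla_{\phi X}\phi X,X)$, $c=g(\nabla_\xi X,\phi X)$, and the two curvature comparisons do the rest. I verified the key identities: $R(X,\phi X)\xi=-2\lambda a\,\phi X+2\lambda a'X$, which vanishes by (\ref{esi3}) and forces $a=a'=0$ since $\lambda=\sqrt{1-\kappa}>0$; and the $\xi$-component of $R(\xi,X)X$ equals $-2c\lambda+(1-\lambda^2)=-2c\lambda+\kappa$, which against the target value $\kappa+\mu\lambda$ gives $c=-\mu/2$ (the $\phi X$-component also shows $Xc=0$, consistent with $\mu$ being constant). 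The brackets then follow from torsion-freeness together with $\nabla_\xi\phi=0$. No gaps.
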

\begin{theorem}\label{6.3}
Let $M^3$ be a $(\kappa, \mu)$-contact metric manifold. Then $M$ is $\phi$-recurrent if and only if
$M$ is flat.
\end{theorem}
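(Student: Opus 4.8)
The plan is to prove the two implications separately; the converse is immediate and the forward direction rests on a single structural observation. For the converse, suppose $M^3$ is flat. Then $R\equiv 0$, hence $\nabla R\equiv 0$ and $(\nabla_W R)(X,Y)Z=0$ for all vector fields, so that $\phi^2((\nabla_W R)(X,Y)Z)=0=A(W)R(X,Y)Z$ trivially for \emph{any} $1$-form $A$. Choosing $A=\eta$, which is non-zero by the contact hypothesis, exhibits $M$ as $\phi$-recurrent, and this direction is finished.

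For the forward direction, assume $M$ is $\phi$-recurrent with recurrence form $A\neq 0$. The key step is to apply $\eta$ to (\ref{rec}). By (ii) of (\ref{con}) one has $\eta(\phi^2 V)=-\eta(V)+\eta(V)\eta(\xi)=0$ for every vector field $V$, so the left-hand side of (\ref{rec}) is annihilated by $\eta$, while the right-hand side becomes $A(W)\,\eta(R(X,Y)Z)$. Thus $A(W)\,\eta(R(X,Y)Z)=0$ for all $X,Y,Z,W$, and since $A$ is non-zero we deduce $\eta(R(X,Y)Z)=0$. I would then substitute this into (\ref{rep2}): putting $X=\xi$ and taking $Y,Z$ orthogonal to $\xi$ yields $\kappa\,g(Y,Z)+\mu\,g(hY,Z)=0$ on the contact distribution. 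Evaluating on a unit $h$-eigenvector with $hY=\lambda Y$, $\lambda=\sqrt{1-\kappa}$, and then on $\phi Y$ (eigenvalue $-\lambda$) gives the pair $\kappa+\mu\lambda=0$ and $\kappa-\mu\lambda=0$, so $\kappa=0$ and $\mu\lambda=0$; as $\kappa=0$ forces $\lambda=1\neq 0$, also $\mu=0$. Hence $\kappa=\mu=0$, so (\ref{esi3}) reduces to $R(X,Y)\xi=0$, and Theorem \ref{Mohim} with $n=1$ concludes that $M$ is flat.

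Since the computation is short, I expect the main obstacle to lie in the degenerate situations rather than in the core identity. First, the $h$-eigenvalue step tacitly uses $\kappa<1$; the Sasakian value $\kappa=1$ (where $h=0$) must be excluded at the outset, which is immediate since $\eta(R(X,Y)Z)=0$ together with $h=0$ would force $g(Y,Z)=0$ for all $Y,Z\perp\xi$, impossible on a two-dimensional distribution. Second, the genuinely delicate point is the precise meaning of the phrase ``$A$ non-zero''. Reading it as in the paper's earlier theorems (that $A(W)\neq 0$ is available), the relation $\eta(R(X,Y)Z)=0$, and hence $\kappa=\mu=0$ and $R(X,Y)\xi=0$, hold wherever $A$ does not vanish, so Theorem \ref{Mohim} yields flatness on that open set; extending $R\equiv 0$ across the locus where $A$ vanishes to the whole connected manifold $M$ is the step I would expect to demand the most justification, and which is cleanest if $A$ is assumed nowhere vanishing.
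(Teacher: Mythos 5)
Your proof is correct, but it takes a genuinely different route from the paper's. The paper works entirely inside the explicit $h$-eigenframe of Lemma \ref{lemma}: it computes the connection coefficients (\ref{Con}), the curvature components (\ref{curva})--(\ref{curva3}), and the single derivative component $(\nabla_{e_2}R)(e_2, e_3)e_1=-(1+\lambda)[(\kappa+\mu\lambda)e_2+(\kappa+\mu)e_3]$, and then uses $R(e_2,e_3)e_1=0$ together with (\ref{rec}) to force $\kappa+\mu=\kappa+\mu\lambda=0$, hence $\kappa=\mu=0$; the Sasakian case $\kappa=1$ is disposed of separately via Theorem \ref{SRE}. You instead apply $\eta$ to (\ref{rec}) and exploit $\eta\circ\phi^2=0$, which annihilates the left-hand side identically and yields $\eta(R(X,Y)Z)=0$ wherever $A\neq 0$; feeding this into (\ref{rep2}) on $h$-eigenvectors gives $\kappa\pm\mu\lambda=0$ and so $\kappa=\mu=0$ with no computation of $\nabla R$ whatsoever, and the same stroke rules out $\kappa=1$. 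Your argument is shorter and is in fact dimension-independent (it shows any $\phi$-recurrent contact metric manifold satisfies $R(X,Y)\xi=0$ where $A\neq 0$); what the paper's heavier frame computation buys is reuse, since (\ref{Symmetric}) and (\ref{IM1})--(\ref{IM3}) also drive Theorems \ref{1}, \ref{2} and \ref{3}, where there is no recurrence form $A$ and your $\eta$-trick extracts no information. One remark: your closing worry about propagating flatness across the zero set of $A$ is unnecessary here, because in this paper the nullity condition is taken for a fixed pair $(\kappa,\mu)\in\mathbb{R}^2$, so $\kappa$ and $\mu$ are global constants; determining $\kappa=\mu=0$ at a single point where $A$ does not vanish already gives $R(X,Y)\xi\equiv 0$ on all of $M$ by (\ref{esi3}), and Theorem \ref{Mohim} then yields flatness everywhere.
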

\begin{proof}
Let $M^3$ be a $(\kappa, \mu)$-contact metric manifold. If $\kappa=1$, then using Theorem \ref{BLAIR} we deduce that $M^3$ is Sasakian. Thus by Theorem \ref{SRE},  we conclude that $M^3$ can not be $\phi$-recurrent. Now let $\kappa<1$ and $X$ be a unit eigenvector of $h$ orthogonal to $\xi$ with corresponding eigenvalue $\lambda=\sqrt{1-\kappa}>0$. Then, according to Lemma \ref{lemma}, there exist three mutually orthonormal vector fields $\xi$, $X$, $\phi X$ such that
\begin{equation}\label{bra}
[X, \phi X]=2\xi,\ \ [\phi X, \xi]=(1-\lambda-\frac{\mu}{2})X,\ \  [\xi, X]=(1+\lambda-\frac{\mu}{2})\phi X,
\end{equation}
where $(\lambda, \mu)\in\mathbb{R}^2$. To simplify in computation, we set
\begin{eqnarray*}
\xi:=e_1,\ \ \  X:=e_2,\ \ \ \phi X:=e_3, \ \  \ c_2:=1-\lambda-\frac{\mu}{2}, \ \  \ c_3:=1+\lambda-\frac{\mu}{2}.
\end{eqnarray*}
Therefore (\ref{bra}) can be written as
\begin{equation}\label{bra1}
[e_2, e_3]=2e_1,\ \ [e_3, e_1]=c_2e_2,\ \  [e_1, e_2]=c_3e_3.
\end{equation}
Since $e_1$, $e_2$ and $e_3$ are orthonormal, then we have $g(e_i, e_j)=\delta_{ij}$. Thus we obtain
\begin{equation}\label{et}
\eta(e_2)=g(e_2, e_1)=0,\ \ \ \eta(e_3)=g(e_3, e_1)=0.
\end{equation}
Using (\ref{bra1}), (\ref{et}) and noting that $\eta(e_1)=\eta(\xi)=1$,  we obtain
\begin{align*}
d\eta(e_3, e_2)&=-d\eta(e_2, e_3)=\frac{1}{2}\eta([e_2, e_3])=1,\\
d\eta(e_i, e_j)&=0,\ \ \ \forall (i, j)\neq(2,3), (3,2).
\end{align*}
Moreover, the condition $ d\eta(e_i, e_j)=g(e_i, \phi e_j)$ gives us
\[
\phi e_1=0, \ \ \ \phi e_2=e_3, \ \ \ \phi e_3=-e_2.
\]
Using (\ref{Lev}), (\ref{bra1}) and  $g(e_i, e_j)=\delta_{ij}$, it follows that
\begin{equation}\label{Con}
\left\{
\begin{array}{cc}
\hspace{-2.3cm}\nabla_{e_1}e_1=0,&\hspace{-2.3cm}\nabla_{e_2}e_2=0,\ \ \nabla_{e_3}e_3=0,\hspace{2cm}\\
\nabla_{e_1}e_2=\frac{1}{2}(c_2+c_3-2)e_3,&\nabla_{e_2}e_1=\frac{1}{2}(c_2-c_3-2)e_3,\\
\hspace{.3cm}\nabla_{e_1}e_3=-\frac{1}{2}(c_2+c_3-2)e_2,&\nabla_{e_3}e_1=\frac{1}{2}(c_2-c_3+2)e_2,\\
\nabla_{e_2}e_3=\frac{1}{2}(c_3-c_2+2)e_1,&\nabla_{e_3}e_2=\frac{1}{2}(c_3-c_2-2)e_1.
\end{array}
\right.
\end{equation}
The Riemannian curvature of $\nabla$ is defined by
\[
R(X, Y)Z=\nabla_X\nabla_YZ-\nabla_Y\nabla_XZ-\nabla_{[X, Y]}Z.
\]
Using (\ref{Con}) and the above equation, one can obtains the following
\begin{align}
R(e_2, e_3)e_2&=\frac{1}{4}[12-4(c_2+c_3)-(c_2-c_3)^2]e_3=(\kappa+\mu)e_3,\label{curva}\\
R(e_2, e_3)e_3&=-\frac{1}{4}[12-4(c_2+c_3)-(c_2-c_3)^2]e_2=-(\kappa+\mu)e_2,\label{curva1}\\
R(e_i, e_j)e_k&=0,\ \ \ \forall i\neq j\neq k\neq i.\label{curva2}
\end{align}
Moreover, since $M^3$ is a $(\kappa, \mu)$-contact metric manifold, then using $he_2=\lambda e_2$ and  $he_3=-\lambda e_3$,  we get
\begin{equation}\label{curva3}
R(e_2, e_1)e_1=(\kappa+\mu\lambda)e_2,\ \ \ R(e_3, e_1)e_1=(\kappa-\mu\lambda)e_3.
\end{equation}
Using (\ref{curva}), (\ref{curva1}), (\ref{curva2}) and (\ref{curva3}) we have
\begin{align}
(\nabla_{e_2}R)(e_2, e_3)e_3&=(\nabla_{e_3}R)(e_2, e_3)e_2=(\nabla_{e_1}R)(e_2, e_3)e_i=0,\ \ i=1, 2, 3,\label{IM1}\\
(\nabla_{e_2}R)(e_2, e_3)e_2&=2(1+\lambda)^2(1-\lambda+\frac{\mu}{2})e_1,\label{IM2}\\
(\nabla_{e_3}R)(e_2, e_3)e_3&=2(\lambda-1)^2(1+\lambda+\frac{\mu}{2})e_1,\label{IM3}\\
(\nabla_{e_2}R)(e_2, e_3)e_1&=-(1+\lambda)[(\kappa+\mu\lambda)e_2+(\kappa+\mu)e_3].\label{Symmetric}
\end{align}
Now, let $M^3$ be the $\phi$-recurrent manifold. Then there exists a non-zero 1-form $A$ on $M$ such that (\ref{rec}) holds for arbitrary vector fields $X$, $Y$, $Z$, $W$ on $M$. Thus using (\ref{rec}), (\ref{curva2}) and (\ref{Symmetric}), it results that
\[
\phi^2\big((\nabla_{e_2}R)(e_2, e_3)e_1\big)=0.
\]
This means that
\[
(1+\lambda)[(\kappa+\mu\lambda)e_2+(\kappa+\mu)e_3]=0.
\]
Since $\lambda>0$, then above equation gives us $\kappa=-\mu$ and $\kappa=-\mu\lambda$. The solution of these equations yield $\kappa=\mu=0$. Thus we have $R(X, Y)\xi=0$ for every vector fields $X$ and $Y$ on $M$. Therefore according to Theorem \ref{Mohim} we conclude that $M^3$ is flat. The converse of the theorem is obvious.
\end{proof}
Now, let $M^3$ be a locally symmetric ($\phi$-symmetric) $(\kappa, \mu)$-contact metric manifold and $\kappa<1$. Then we have $\nabla R=0$ ($\phi^2(\nabla R)=0$). Thus using (\ref{Symmetric}), similar to the proof of the above theorem, we obtain $\kappa=\mu=0$. Therefore we can conclude the following.
\begin{theorem}\label{1}
Let $M^3$ be a non-Sasakian $(\kappa, \mu)$-contact metric manifold. Then $M$ is locally symmetric if and only if $M$ is flat.
\end{theorem}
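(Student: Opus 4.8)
The statement to prove is Theorem \ref{1}: a non-Sasakian $(\kappa, \mu)$-contact metric manifold $M^3$ is locally symmetric if and only if it is flat. My plan is to reuse the entire machinery already developed for Theorem \ref{6.3}, since local symmetry is a strictly stronger condition than $\phi$-recurrence and the hard computational work has already been carried out. First I would observe that the ``if'' direction is trivial: a flat manifold satisfies $R = 0$, hence $\nabla R = 0$, so it is automatically locally symmetric. The content lies in the ``only if'' direction.

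For the forward direction, I would set up exactly the same orthonormal frame $\{\xi, X, \phi X\} = \{e_1, e_2, e_3\}$ furnished by Lemma \ref{lemma}, with the constants $c_2 = 1 - \lambda - \frac{\mu}{2}$ and $c_3 = 1 + \lambda - \frac{\mu}{2}$, and I would import the connection formulas (\ref{Con}) and the curvature components (\ref{curva})--(\ref{curva3}) verbatim. Since $M^3$ is non-Sasakian, we have $\kappa < 1$, so $\lambda = \sqrt{1 - \kappa} > 0$ and the frame is well-defined. Local symmetry means $\nabla R = 0$, so in particular every covariant derivative computed in (\ref{IM1})--(\ref{Symmetric}) must vanish. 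The key step is then to invoke (\ref{Symmetric}): the condition
\[
(\nabla_{e_2}R)(e_2, e_3)e_1 = -(1+\lambda)\big[(\kappa + \mu\lambda)e_2 + (\kappa + \mu)e_3\big] = 0
\]
forces $\kappa + \mu\lambda = 0$ and $\kappa + \mu = 0$, since $\lambda > 0$ makes the prefactor $1 + \lambda$ nonzero and $e_2, e_3$ are linearly independent.

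Solving this linear system is the decisive step, though it is elementary: subtracting the two equations gives $\mu(\lambda - 1) = 0$, and since $\lambda > 0$ with $\lambda = 1$ only when $\kappa = 0$, one checks that the only consistent solution is $\kappa = \mu = 0$. With $\kappa = \mu = 0$, the defining relation (\ref{esi3}) collapses to $R(X, Y)\xi = 0$ for all vector fields $X, Y$, and then Theorem \ref{Mohim} (Blair's theorem) applies: in dimension $3$ (the case $n = 1$), a contact metric manifold with $R(X, Y)\xi = 0$ is flat. This completes the forward direction.

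The main obstacle, if any, is conceptual rather than computational: I must be careful that the hypothesis in this theorem is \emph{local symmetry} ($\nabla R = 0$) rather than the weaker $\phi$-recurrence used in Theorem \ref{6.3}, so I should explicitly note that $\nabla R = 0$ implies in particular that the specific component (\ref{Symmetric}) vanishes — the full strength of local symmetry is not needed, only this one component. Indeed, the remark preceding the theorem already sketches exactly this reduction, pointing out that for a locally symmetric (or $\phi$-symmetric) manifold one has $\nabla R = 0$ (respectively $\phi^2(\nabla R) = 0$), and that applying this to (\ref{Symmetric}) yields $\kappa = \mu = 0$ ``similar to the proof of the above theorem.'' So the entire proof amounts to citing (\ref{Symmetric}), extracting $\kappa = \mu = 0$, and appealing to Theorem \ref{Mohim}, with the converse being immediate.
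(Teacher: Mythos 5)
Your proposal is correct and follows essentially the same route as the paper: the authors likewise reduce to the frame of Lemma \ref{lemma}, apply $\nabla R=0$ to the component (\ref{Symmetric}) to force $\kappa+\mu\lambda=0$ and $\kappa+\mu=0$, hence $\kappa=\mu=0$, and then invoke Theorem \ref{Mohim} for flatness, with the converse immediate. No gaps.
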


\bigskip
By the same argument used for the Theorem \ref{1}, we have the following.

\begin{theorem}\label{2}
Let $M^3$ be a non-Sasakian $(\kappa, \mu)$-contact metric manifold. Then $M$ is $\phi$-symmetric if and only if $M$ is flat.
\end{theorem}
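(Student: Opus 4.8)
The plan is to run the argument of Theorem~\ref{6.3} essentially verbatim, the only change being that the $\phi$-recurrence relation (\ref{rec}) is replaced by the defining identity of $\phi$-symmetry, $\phi^2\big((\nabla_WR)(X,Y)Z\big)=0$ for \emph{all} vector fields $W,X,Y,Z$. Since $M^3$ is non-Sasakian we have $\kappa<1$, so Lemma~\ref{lemma} applies and furnishes the orthonormal frame $\xi=e_1$, $X=e_2$, $\phi X=e_3$ together with the structure constants $c_2,c_3$. All of the connection and curvature computations (\ref{curva})--(\ref{curva3}) carried out in the proof of Theorem~\ref{6.3} remain valid, and in particular the crucial formula (\ref{Symmetric}) holds, namely $(\nabla_{e_2}R)(e_2,e_3)e_1=-(1+\lambda)[(\kappa+\mu\lambda)e_2+(\kappa+\mu)e_3]$.

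First I would establish the forward direction. Assuming $M^3$ is $\phi$-symmetric, I apply the defining identity with $W=X=e_2$, $Y=e_3$ and $Z=e_1=\xi$, which gives $\phi^2\big((\nabla_{e_2}R)(e_2,e_3)e_1\big)=0$. Because $e_2$ and $e_3$ are orthogonal to $\xi$, part (ii) of (\ref{con}) yields $\phi^2e_2=-e_2$ and $\phi^2e_3=-e_3$, so applying $\phi^2$ to (\ref{Symmetric}) turns the vanishing into
\[
(1+\lambda)\big[(\kappa+\mu\lambda)e_2+(\kappa+\mu)e_3\big]=0.
\]
Since $\lambda=\sqrt{1-\kappa}>0$ and $e_2,e_3$ are linearly independent, this forces $\kappa+\mu\lambda=0$ and $\kappa+\mu=0$; subtracting gives $\mu(\lambda-1)=0$, and in either case ($\mu=0$ or $\lambda=1$) one is led to $\kappa=\mu=0$.

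With $\kappa=\mu=0$, identity (\ref{esi3}) reduces to $R(X,Y)\xi=0$, so Theorem~\ref{Mohim} (applied with $n=1$, i.e.\ $\dim M=3$) shows that $M^3$ is flat. The converse is immediate: if $M$ is flat then $R\equiv 0$, hence $\nabla R=0$ and $\phi^2\big((\nabla_WR)(X,Y)Z\big)=0$ trivially, so $M$ is $\phi$-symmetric. I do not expect any genuine obstacle here, since the heavy computation is already contained in Theorem~\ref{6.3}; the one point I would be careful to flag is that the forward implication must be evaluated at $Z=\xi$, and so it genuinely uses the \emph{global} $\phi$-symmetry hypothesis rather than its local counterpart, where $W,X,Y,Z$ are constrained to be orthogonal to $\xi$.
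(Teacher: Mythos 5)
Your proposal is correct and follows essentially the same route as the paper, which proves Theorem~\ref{2} by invoking the identity (\ref{Symmetric}) from the proof of Theorem~\ref{6.3}, applying $\phi^2$ to force $\kappa+\mu\lambda=\kappa+\mu=0$, hence $\kappa=\mu=0$, and then concluding flatness via Theorem~\ref{Mohim}. Your added remark that the argument requires $Z=\xi$ and therefore genuinely uses global rather than local $\phi$-symmetry is accurate and consistent with Theorem~\ref{3}.
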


\bigskip

Since $\phi e_1=0$, then using (\ref{IM1}), (\ref{IM2}) and (\ref{IM3}) we deduce that
\[
(\nabla_{e_2}R)(e_2, e_3)e_2=(\nabla_{e_2}R)(e_2, e_3)e_3=(\nabla_{e_3}R)(e_2, e_3)e_2=(\nabla_{e_3}R)(e_2, e_3)e_3=0.
\]
Thus for every vector fields $X$, $Y$, $Z$, $W$ orthogonal to $\xi$, it follows that
\[
\phi^2((\nabla_WR)(X, Y)Z)=0.
\]
Therefore we get the following.
\begin{theorem}\label{3}
Every non-Sasakian $(\kappa, \mu)$-contact metric manifold $M^3(\phi, \xi, \eta, g)$ is locally $\phi$-symmetric.
\end{theorem}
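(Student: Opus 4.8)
The plan is to reduce the statement of Theorem \ref{3} to the explicit curvature computations already carried out in the proof of Theorem \ref{6.3}. Since $M^3$ is a non-Sasakian $(\kappa,\mu)$-contact metric manifold, we have $\kappa<1$, so Lemma \ref{lemma} applies and we obtain the orthonormal frame $\{e_1,e_2,e_3\}=\{\xi,X,\phi X\}$ together with the connection coefficients (\ref{Con}) and the curvature identities (\ref{curva})--(\ref{curva3}). The covariant derivatives of $R$ along this frame have already been recorded in (\ref{IM1}), (\ref{IM2}) and (\ref{IM3}), so the bulk of the computational work is done; my task is purely to assemble these into the defining condition (\ref{a1}) for local $\phi$-symmetry.

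First I would recall the definition: local $\phi$-symmetry requires $\phi^2((\nabla_WR)(X,Y)Z)=0$ for all $W,X,Y,Z$ orthogonal to $\xi$. Since the orthogonal complement of $\xi$ is spanned by $e_2$ and $e_3$, it suffices to verify the condition on all combinations of these two basis vectors. The key structural observation is that the only nonzero curvature components involving exclusively $e_2$ and $e_3$ are $R(e_2,e_3)e_2$ and $R(e_2,e_3)e_3$ from (\ref{curva}) and (\ref{curva1}), together with (\ref{curva2}) which kills mixed triples. Consequently the relevant covariant derivatives $(\nabla_WR)(X,Y)Z$ with all arguments orthogonal to $\xi$ reduce, after using antisymmetry of $R$ in its first two slots, to the four expressions $(\nabla_{e_2}R)(e_2,e_3)e_2$, $(\nabla_{e_2}R)(e_2,e_3)e_3$, $(\nabla_{e_3}R)(e_2,e_3)e_2$, $(\nabla_{e_3}R)(e_2,e_3)e_3$.

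Next I would invoke (\ref{IM1}), which already states that $(\nabla_{e_2}R)(e_2,e_3)e_3$ and $(\nabla_{e_3}R)(e_2,e_3)e_2$ vanish outright, while (\ref{IM2}) and (\ref{IM3}) show that the remaining two, $(\nabla_{e_2}R)(e_2,e_3)e_2$ and $(\nabla_{e_3}R)(e_2,e_3)e_3$, are scalar multiples of $e_1=\xi$. Applying $\phi^2$ and using $\phi e_1=\phi\xi=0$, hence $\phi^2\xi=0$, annihilates these terms. This is exactly the displayed chain of equalities immediately preceding the statement of Theorem \ref{3}, so the conclusion $\phi^2((\nabla_WR)(X,Y)Z)=0$ for all $W,X,Y,Z\perp\xi$ follows directly.

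I do not anticipate a genuine obstacle here, since the heavy curvature bookkeeping was completed inside Theorem \ref{6.3}; the one point requiring care is the reduction step, namely arguing that a general $(\nabla_WR)(X,Y)Z$ with arguments among $e_2,e_3$ is forced by bilinearity and the antisymmetry $R(X,Y)=-R(Y,X)$ to be a combination of precisely the four listed derivatives. One should also confirm that no covariant derivative in the $e_2$ or $e_3$ direction of a vanishing curvature component (as in (\ref{curva2})) can produce a surviving $e_2$ or $e_3$ term; the values in (\ref{IM2}) and (\ref{IM3}) being pure multiples of $e_1$ is what guarantees this, and it is precisely this feature that makes $\phi^2$ kill everything. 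Hence the theorem holds for every non-Sasakian $(\kappa,\mu)$-contact metric manifold of dimension three, with no further hypotheses needed.
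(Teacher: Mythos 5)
Your proposal is correct and follows essentially the same route as the paper: it invokes (\ref{IM1})--(\ref{IM3}) to see that the only possibly nonzero derivatives $(\nabla_{e_a}R)(e_2,e_3)e_d$ with $a,d\in\{2,3\}$ are multiples of $e_1=\xi$, which $\phi^2$ annihilates. Your explicit justification of the reduction to those four components via multilinearity and the antisymmetry of $R$ is in fact slightly more careful than the paper's own one-line derivation.
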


\bigskip

By the Theorems \ref{SRE}, \ref{6.3}, \ref{1}, \ref{2},  we conclude the following.
\begin{cor}
There exists no 3-dimensional $\phi$-recurrent $(\kappa, \mu)$-contact metric manifold, which is not locally symmetric (locally $\phi$-symmetric or $\phi$-symmetric).
\end{cor}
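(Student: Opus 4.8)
The plan is to deduce this corollary purely by combining the four theorems already established, so the work consists in arranging them in the correct logical order rather than in any fresh computation. Let $M^3$ denote a $3$-dimensional $\phi$-recurrent $(\kappa,\mu)$-contact metric manifold. The first and most delicate step is to rule out the Sasakian case, since the non-Sasakian hypothesis is required before Theorems \ref{1}, \ref{2} and \ref{3} can be invoked. To this end I would argue by contradiction: if $M^3$ were Sasakian (equivalently $\kappa=1$), then it would be a $3$-dimensional $\phi$-recurrent Sasakian manifold, contradicting Theorem \ref{SRE}. Hence $\kappa<1$ and $M^3$ is non-Sasakian.

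Next I would read off flatness. Since $M^3$ is a $\phi$-recurrent $(\kappa,\mu)$-contact metric manifold of dimension $3$, Theorem \ref{6.3} applies directly and yields that $M^3$ is flat. With non-Sasakianity and flatness both in hand, the three desired properties follow immediately from the flatness characterizations: Theorem \ref{1} gives that $M^3$ is locally symmetric, Theorem \ref{2} gives that $M^3$ is $\phi$-symmetric, and Theorem \ref{3} (which requires only that $M^3$ be a non-Sasakian $(\kappa,\mu)$-contact metric manifold of dimension $3$) gives that $M^3$ is locally $\phi$-symmetric. Alternatively, one may note that flatness means $R=0$, whence $\nabla R=0$ and in particular $\phi^2\big((\nabla_W R)(X,Y)Z\big)=0$, so all three symmetry conditions hold trivially; invoking the named theorems simply records this in the form the corollary cites.

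Assembling these observations completes the argument: every $3$-dimensional $\phi$-recurrent $(\kappa,\mu)$-contact metric manifold is simultaneously locally symmetric, $\phi$-symmetric and locally $\phi$-symmetric, so there can exist no such manifold failing any of these properties. I do not anticipate a genuine obstacle in the calculation, since all of the substance has been absorbed into the preceding theorems; the only point demanding care is the logical sequencing, namely that the Sasakian case must be excluded via Theorem \ref{SRE} \emph{before} the non-Sasakian results of Theorems \ref{1}, \ref{2} and \ref{3} may legitimately be applied, and that flatness must be extracted from Theorem \ref{6.3} at the correct moment.
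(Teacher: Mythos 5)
Your proposal is correct and follows essentially the same route as the paper, which simply cites Theorems \ref{SRE}, \ref{6.3}, \ref{1} and \ref{2} without further elaboration: exclude the Sasakian case via Theorem \ref{SRE}, extract flatness from Theorem \ref{6.3}, and then read off all three symmetry properties (either from Theorems \ref{1}, \ref{2}, \ref{3} or trivially from $R=0$). Your explicit remark that the Sasakian case must be eliminated before the non-Sasakian theorems apply is a useful clarification of a step the paper leaves implicit.
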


Also, from Theorem \ref{3} it results the following.
\begin{cor}
There exists no non-Sasakian locally $\phi$-recurrent $(\kappa, \mu)$-contact metric manifold with dimension 3, which is not locally $\phi$-symmetric.
\end{cor}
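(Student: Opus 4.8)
The plan is to obtain this corollary as an immediate logical consequence of Theorem \ref{3}, arguing by contradiction. Suppose, toward a contradiction, that there did exist a manifold $M^3(\phi, \xi, \eta, g)$ that is simultaneously non-Sasakian, a $(\kappa, \mu)$-contact metric manifold of dimension $3$, locally $\phi$-recurrent, and \emph{not} locally $\phi$-symmetric. First I would strip away the hypotheses that are inessential and record only the ones that matter: $M^3$ is a non-Sasakian $(\kappa, \mu)$-contact metric manifold of dimension $3$.

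The key step is then to invoke Theorem \ref{3}, which asserts that \emph{every} such manifold is locally $\phi$-symmetric. Applied to $M^3$, this forces $M^3$ to be locally $\phi$-symmetric, directly contradicting the standing assumption that it is not. Hence no manifold with the stated combination of properties can exist, which is exactly the assertion of the corollary.

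The point worth emphasizing is that the local $\phi$-recurrence hypothesis is entirely inert here: Theorem \ref{3} already classifies \emph{all} non-Sasakian three-dimensional $(\kappa, \mu)$-contact metric manifolds as locally $\phi$-symmetric, so imposing the further requirement of local $\phi$-recurrence merely restricts an already empty family. Consequently there is no genuine obstacle to overcome; the only care required is the bookkeeping check that the manifold in the hypothesis of the corollary indeed satisfies all the hypotheses of Theorem \ref{3} (non-Sasakian, $(\kappa,\mu)$-contact, dimension three), which it does by assumption. In this sense the entire content of the corollary is pushed back onto Theorem \ref{3}, whose justification rests on the local computation of $(\nabla_{e_i}R)(e_2,e_3)e_j$ recorded in (\ref{IM1})--(\ref{IM3}) together with $\phi e_1 = 0$.
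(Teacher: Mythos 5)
Your proposal is correct and matches the paper exactly: the authors also derive this corollary as an immediate consequence of Theorem \ref{3}, since that theorem makes the class of non-Sasakian $(\kappa,\mu)$-contact metric $3$-manifolds that fail to be locally $\phi$-symmetric empty, rendering the local $\phi$-recurrence hypothesis inert. Nothing further is needed.
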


\bigskip

In \cite{BKP},  the authors proved the following result.
\begin{theorem}\label{BLAIR} {\rm (\cite{BKP})}
\emph{Let $M^{2n+1}$ be a contact metric manifold with $\xi$ belonging to the $(\kappa, \mu)$-nullity distribution. Then $\kappa\leq 1$. If $\kappa=1$, then $h=0$ and $M^{2n+1}$ is a Sasakian manifold. If $\kappa<1$, then $M^{2n+1}$ admits three mutually orthogonal and integrable distributions $D(0)$, $D(\lambda)$ and $D(-\lambda)$ determined by the eigenspaces of $h$, where $\lambda=\sqrt{1-\kappa}$. Moreover,}
\begin{align}
R(X_{\lambda}, Y_{\lambda})Z_{-\lambda}&=(\kappa-\mu)[g(\phi Y_{\lambda}, Z_{-\lambda})\phi X_{\lambda}-g(\phi X_{\lambda}, Z_{-\lambda})\phi Y_{\lambda}],\label{27}\\
R(X_{-\lambda}, Y_{-\lambda})Z_{\lambda}&=(\kappa-\mu)[g(\phi Y_{-\lambda}, Z_{\lambda})\phi X_{-\lambda}-g(\phi X_{-\lambda}, Z_{\lambda})\phi Y_{-\lambda}],\\
R(X_{\lambda}, Y_{-\lambda})Z_{-\lambda}&=\kappa g(\phi X_{\lambda}, Z_{-\lambda})\phi Y_{-\lambda}+\mu g(\phi X_{\lambda}, Y_{-\lambda})\phi Z_{-\lambda}],\\
R(X_{\lambda}, Y_{-\lambda})Z_{\lambda}&=-\kappa g(\phi Y_{-\lambda}, Z_{\lambda})\phi X_{\lambda}-\mu g(\phi Y_{-\lambda}, X_{\lambda})\phi Z_{\lambda}],\label{30}\\
R(X_{\lambda}, Y_{\lambda})Z_{\lambda}&=[2(1+\lambda)-\mu][g(Y_{\lambda}, Z_{\lambda})X_{\lambda}-g(X_{\lambda}, Z_{\lambda})Y_{\lambda}],\label{31}\\
R(X_{-\lambda}, Y_{-\lambda})Z_{-\lambda}&=[2(1-\lambda)-\mu][g(Y_{-\lambda}, Z_{-\lambda})X_{-\lambda}-g(X_{-\lambda}, Z_{-\lambda})Y_{-\lambda}],\label{310}
\end{align}
\emph{where $X_{\lambda}, Y_{\lambda}, Z_{\lambda}\in D(\lambda)$ and $X_{-\lambda}, Y_{-\lambda}, Z_{-\lambda}\in D(-\lambda)$.}
\end{theorem}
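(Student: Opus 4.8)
The plan is to read off the spectrum of $h$ from the algebraic identity, and then to reduce everything to a single covariant-derivative formula for $h$ from which integrability and the curvature formulas are mechanical consequences. On the contact subbundle $\ker\eta$ one has $\phi^2=-\mathrm{Id}$, so the relation $h^2=(\kappa-1)\phi^2$ reads $h^2=(1-\kappa)\,\mathrm{Id}$ there. As $h$ is self-adjoint, $h^2$ is positive semidefinite, forcing $\kappa\le 1$; and $\kappa=1$ gives $h^2=0$, hence $h=0$, so $M^{2n+1}$ is Sasakian by \eqref{rep}. For $\kappa<1$ set $\lambda=\sqrt{1-\kappa}>0$; then $h$ has eigenvalues $0,\lambda,-\lambda$, with $D(0)=\mathbb{R}\xi$ and $D(\pm\lambda)$ the $\pm\lambda$-eigenspaces in $\ker\eta$. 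Since $h\phi=-\phi h$, the map $\phi$ interchanges $D(\lambda)$ and $D(-\lambda)$, so these have equal dimension $n$, and all three distributions are mutually orthogonal because $h$ is self-adjoint.

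The decisive step is to establish the closed formula
\[
(\nabla_X h)Y=\{(1-\kappa)g(X,\phi Y)+g(X,h\phi Y)\}\xi+\eta(Y)\,h(\phi X+\phi hX)-\mu\,\eta(X)\,\phi hY.
\]
I would derive this from the general contact-metric identities of Section 2 — \eqref{Kill}, the expression for $(\nabla_X\phi)Y$, and the two displayed identities relating $R(\xi,\cdot)$ and $\nabla_{hX}\phi$ to $\nabla\phi$ — combined with the nullity condition \eqref{esi3}. This is the step that encodes all of the geometry; once it is available, the remainder is bookkeeping.

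Integrability then follows cleanly. Since $\lambda$ is a constant, differentiating $hY=\lambda Y$ gives $h\nabla_X Y=\lambda\nabla_X Y-(\nabla_X h)Y$, so for $X,Y\in D(\lambda)$ one gets $h[X,Y]=\lambda[X,Y]-[(\nabla_X h)Y-(\nabla_Y h)X]$. But for such $X,Y$ the formula above collapses to $(\nabla_X h)Y=\lambda(\lambda-1)g(X,\phi Y)\,\xi=0$, because $\phi Y\in D(-\lambda)$ is orthogonal to $X\in D(\lambda)$. Hence $h[X,Y]=\lambda[X,Y]$, i.e. $[X,Y]\in D(\lambda)$, so $D(\lambda)$ is involutive; $D(-\lambda)$ is identical, and $D(0)=\mathbb{R}\xi$ is one-dimensional.

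Finally I would read off the connection coefficients $\nabla_U V$ for $U,V$ ranging over $D(\lambda)$, $D(-\lambda)$ and $\mathbb{R}\xi$ from $\nabla_X\xi=-\phi X-\phi hX$, the formula for $\nabla\phi$, and the $\nabla h$ identity, and then evaluate $R(U,V)W=\nabla_U\nabla_V W-\nabla_V\nabla_U W-\nabla_{[U,V]}W$ case by case according to the eigenspace memberships of $U,V,W$, repeatedly using $h\phi=-\phi h$ and \eqref{esi3} to resolve the $\xi$-components; this yields \eqref{27}--\eqref{310}. The main obstacle is twofold: proving the $\nabla h$ formula requires the finer second-order contact-metric identities rather than merely the first-order structure equations, and the curvature computation, though routine in principle, is long because every mixed term must be decomposed into its $D(\lambda)$, $D(-\lambda)$ and $\xi$ parts. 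I expect the $\nabla h$ identity to be where the genuine work lies.
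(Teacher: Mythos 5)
First, note that the paper itself gives no proof of this statement: Theorem \ref{BLAIR} is imported verbatim from Blair--Koufogiorgos--Papantoniou \cite{BKP}, so your attempt has to be measured against their argument rather than anything in this text. Up to the integrability statement your reconstruction is essentially theirs and is correct: $h^2=(\kappa-1)\phi^2$ together with self-adjointness of $h$ gives $\kappa\le 1$ and the spectral decomposition $D(0)=\mathbb{R}\xi$, $D(\pm\lambda)$; the displayed formula for $(\nabla_X h)Y$ is exactly the key lemma of \cite{BKP}; and your deduction that it vanishes for $X,Y\in D(\lambda)$, hence $h[X,Y]=\lambda[X,Y]$, is the right route to involutivity.

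The gap is in the last step. For $n>1$ the data $\nabla\xi$, $\nabla\phi$, $\nabla h$ do \emph{not} determine the connection coefficients $\nabla_UV$: they fix only the components of $\nabla_{X_\lambda}Y_\lambda$ transverse to $D(\lambda)$ (namely that these vanish, which is Theorem \ref{THI}), and say nothing about the component inside $D(\lambda)$ --- that component is the intrinsic connection of the leaves and is genuinely second-order information not contained in the first-order structure tensors. So ``evaluate $R(U,V)W=\nabla_U\nabla_VW-\cdots$ case by case'' cannot be carried out as described; in particular the coefficient $2(1+\lambda)-\mu$ in (\ref{31}) is unreachable this way. (The plan does work when $n=1$, because then $D(\pm\lambda)$ are line fields spanned by unit eigenvectors and the undetermined tangential parts are forced to vanish by metric compatibility --- that is exactly Lemma \ref{lemma} and the computation in Theorem \ref{6.3}.) The missing idea is to apply the Ricci commutation identity to the tensors whose covariant derivatives you \emph{do} know in closed form: from the formulas for $\nabla\phi$ and $\nabla h$ one computes $R(X,Y)\phi Z-\phi R(X,Y)Z$ and $R(X,Y)hZ-hR(X,Y)Z$ explicitly, and these commutators, combined with the nullity condition (\ref{esi3}), the pair symmetry of $R$ and the first Bianchi identity, determine $R$ on each eigenspace combination and yield (\ref{27})--(\ref{310}). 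That commutation step, not the $\nabla h$ lemma alone, is where \cite{BKP} actually close the argument, and it is absent from your outline.
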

Then they showed the following.
\begin{theorem}\label{THI}{\rm (\cite{BKP})}
\emph{Let $M^{2n+1}$ be a $(\kappa, \mu)$-contact metric manifold with $\kappa<1$. Then the following hold:\\
(i)\ If $X, Y\in D(\lambda)$ (resp. $D(-\lambda)$), then $\nabla_XY\in D(\lambda)$ (resp. $D(-\lambda)$).\\
(ii)\ If $X\in D(\lambda)$, $Y\in D(-\lambda)$, then $\nabla_XY$ (resp. $\nabla_YX$) has no component in
$D(\lambda)$ (resp. $D(-\lambda)$).}
\end{theorem}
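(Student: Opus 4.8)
The plan is to locate $\nabla_XY$ among the eigendistributions of $h$ by computing how $h$ acts on it. Since $\kappa<1$ is a constant, $\lambda=\sqrt{1-\kappa}$ is a nonzero constant, and from $h^2=(\kappa-1)\phi^2$ together with $h\xi=0$ the self-adjoint operator $h$ has the three distinct eigenvalues $\lambda,-\lambda,0$, whose eigenspaces are precisely $D(\lambda)$, $D(-\lambda)$ and the line spanned by $\xi$. Thus a tangent vector lies in $D(c)$ exactly when it is a $c$-eigenvector of $h$, so in order to place $\nabla_XY$ it suffices to evaluate $h(\nabla_XY)$. Because $\lambda$ is constant, for $Y\in D(\lambda)$ one has
\[
h(\nabla_XY)=\nabla_X(hY)-(\nabla_Xh)Y=\lambda\,\nabla_XY-(\nabla_Xh)Y,
\]
and analogously with $-\lambda$ for $Y\in D(-\lambda)$; hence the whole statement is reduced to controlling the tensor $\nabla h$ on eigenvectors.

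The central step, which I expect to be the main obstacle, is to establish the formula
\[
(\nabla_Xh)Y=\{(1-\kappa)g(X,\phi Y)+g(X,h\phi Y)\}\xi+\eta(Y)\,h(\phi X+\phi hX)-\mu\,\eta(X)\phi hY.
\]
I would derive it from the structure relations already recorded in Section 2: starting from the given identity $g(R(\xi,X)Y,Z)=g((\nabla_X\phi)Y,Z)+g((\nabla_Z\phi h)Y-(\nabla_Y\phi h)Z,X)$, I substitute the $(\kappa,\mu)$-expressions $R(\xi,X)Y=\kappa(g(X,Y)\xi-\eta(Y)X)+\mu(g(hX,Y)\xi-\eta(Y)hX)$ and $(\nabla_X\phi)Y=g(X+hX,Y)\xi-\eta(Y)(X+hX)$, and combine this with the displayed formula for $2(\nabla_{hX}\phi)Y$ to isolate $\nabla(\phi h)$; applying $\phi$ and using $\phi^2=-\mathrm{Id}+\eta\otimes\xi$, $\phi h=-h\phi$ and $h^2=(\kappa-1)\phi^2$ then turns this into the stated expression for $\nabla h$. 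As a sanity check the formula specialises correctly to $\nabla_\xi h=-\mu\,\phi h$ (take $X=\xi$) and reproduces the connection coefficients of Lemma \ref{lemma} in dimension three. This derivation is a bookkeeping-heavy computation rather than a conceptual difficulty, but it is where essentially all the work sits.

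Granting the formula, the two assertions drop out by substitution. For (i) take $X,Y\in D(\lambda)$, so $\eta(X)=\eta(Y)=0$ and $h\phi Y=-\lambda\phi Y$ (as $\phi Y\in D(-\lambda)$); the formula collapses to $(\nabla_Xh)Y=(\lambda^2-\lambda)g(X,\phi Y)\,\xi$, and $g(X,\phi Y)=0$ because $X\in D(\lambda)$ is orthogonal to $\phi Y\in D(-\lambda)$. Hence $(\nabla_Xh)Y=0$ and $h(\nabla_XY)=\lambda\,\nabla_XY$, so $\nabla_XY$ is a pure $\lambda$-eigenvector of $h$; since $\lambda\neq0$ this simultaneously kills any component along $\xi$ and any $D(-\lambda)$-component, giving $\nabla_XY\in D(\lambda)$. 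The $D(-\lambda)$ case is identical with $\lambda$ replaced by $-\lambda$. For (ii) take $X\in D(\lambda)$, $Y\in D(-\lambda)$; now $\phi Y\in D(\lambda)$ so $h\phi Y=\lambda\phi Y$ and $(\nabla_Xh)Y=\lambda(\lambda+1)g(X,\phi Y)\,\xi$, where $g(X,\phi Y)$ need not vanish since $X$ and $\phi Y$ both lie in $D(\lambda)$. Writing $\nabla_XY=B_\lambda+B_{-\lambda}+B_0\,\xi$ with $B_{\pm\lambda}\in D(\pm\lambda)$ and comparing the $D(\lambda)$-components of $h(\nabla_XY)=-\lambda\,\nabla_XY-(\nabla_Xh)Y$ gives $\lambda B_\lambda=-\lambda B_\lambda$, whence $B_\lambda=0$; that is, $\nabla_XY$ has no component in $D(\lambda)$. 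The claim for $\nabla_YX$ follows verbatim from the symmetric computation, now using $hX=\lambda X$ and matching $D(-\lambda)$-components. (The same matching recovers $B_0=-(1+\lambda)g(X,\phi Y)$, and the absence of a $\xi$-component in (i) can alternatively be checked directly from $\nabla_X\xi=-(1+\lambda)\phi X$, which is (\ref{Kill}).)
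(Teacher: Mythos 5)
The paper does not prove this statement: Theorem~\ref{THI} is imported verbatim from \cite{BKP} with no argument given, so there is no in-paper proof to compare against. Judged on its own terms, your proposal is correct and is in fact essentially the proof in the cited source. The decomposition $TM=D(\lambda)\oplus D(-\lambda)\oplus\mathbb{R}\xi$ into eigenspaces of the self-adjoint operator $h$ (valid because $h^2=(\kappa-1)\phi^2$ forces $h^2=\lambda^2\,\mathrm{Id}$ on the contact distribution with $\lambda=\sqrt{1-\kappa}>0$), the reduction of the whole question to $(\nabla_Xh)Y$ via $h(\nabla_XY)=\nabla_X(hY)-(\nabla_Xh)Y$, and the displayed formula
\[
(\nabla_Xh)Y=\{(1-\kappa)g(X,\phi Y)+g(X,h\phi Y)\}\xi+\eta(Y)\,h(\phi X+\phi hX)-\mu\,\eta(X)\phi hY
\]
are exactly the ingredients of \cite{BKP} (the formula is their Lemma~3.3, and your sanity check $\nabla_\xi h=-\mu\phi h$ is consistent with it). Your applications of the formula check out: in case (i) the anticommutation $h\phi=-\phi h$ puts $\phi Y$ in $D(-\lambda)$, the coefficient of $\xi$ becomes $(\lambda^2-\lambda)g(X,\phi Y)$, and $g(X,\phi Y)=0$ by orthogonality of distinct eigenspaces, so $\nabla_XY$ is a genuine $\lambda$-eigenvector and hence lies in $D(\lambda)$; in case (ii) the $\xi$-component survives but the comparison of $D(\lambda)$-components (resp.\ $D(-\lambda)$-components for $\nabla_YX$) correctly yields $B_\lambda=0$ (resp.\ $C_{-\lambda}=0$), and your byproduct $B_0=-(1+\lambda)g(X,\phi Y)$ is the known value. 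The only reservation is that the central $\nabla h$ formula, where as you say all the work sits, is asserted with a sketched derivation rather than proved; the sketch via the structure identities of Section~2 is a viable route, but as written the proposal leans on that formula as an external fact, exactly as the paper itself leans on \cite{BKP}.
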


Using (\ref{FS}), (\ref{31}) and (i) of Theorem \ref{THI},  we obtain
\begin{align}\label{Im40}
(\nabla_{W_{\lambda}}R)(X_{\lambda}, Y_{\lambda})Z_{\lambda}&=[2(1+\lambda)-\mu][(\nabla_{W_{\lambda}}g(Y_{\lambda}, Z_{\lambda}))X_{\lambda}+g(Y_{\lambda}, Z_{\lambda})\nabla_{W_{\lambda}}X_{\lambda}\nonumber\\
&\ \ -(\nabla_{W_{\lambda}}g(X_{\lambda}, Z_{\lambda}))Y_{\lambda}-g(X_{\lambda}, Z_{\lambda})\nabla_{W_{\lambda}}Y_{\lambda}-g(\nabla_{W_{\lambda}}Y_{\lambda}, Z_{\lambda})X_{\lambda}\nonumber\\
&\ \ +g(X_{\lambda}, Z_{\lambda})\nabla_{W_{\lambda}}Y_{\lambda}-g(Y_{\lambda}, Z_{\lambda})\nabla_{W_{\lambda}}X_{\lambda}+g(\nabla_{W_{\lambda}}X_{\lambda}, Z_{\lambda})Y_{\lambda}\nonumber\\
&\ \ -g(Y_{\lambda}, \nabla_{W_{\lambda}}Z_{\lambda})X_{\lambda}+g(X_{\lambda}, \nabla_{W_{\lambda}}Z_{\lambda})Y_{\lambda}]\nonumber\\
&=[2(1+\lambda)-\mu][(\nabla_{W_{\lambda}}g)(Y_{\lambda}, Z_{\lambda})X_{\lambda}-(\nabla_{W_{\lambda}}g)(X_{\lambda}, Z_{\lambda})Y_{\lambda}]\nonumber\\
&=0.
\end{align}
Similarly, using (\ref{FS}), (\ref{310}) and (i) of Theorem \ref{THI}, it follows that
\[
(\nabla_{W_{-\lambda}}R)(X_{-\lambda}, Y_{-\lambda})Z_{-\lambda}=0.
\]
Therefore we have
\begin{lemma}\label{Esymmetric}
Let $M^{2n+1}$ be a $(\kappa, \mu)$-contact metric manifold. Then $\nabla R$ vanishes on $D(\lambda)$ and $D(-\lambda)$, i.e., we have
\[
(\nabla_{W_{\lambda}}R)(X_{\lambda}, Y_{\lambda})Z_{\lambda}=0, \ \ \ (\nabla_{W_{-\lambda}}R)(X_{-\lambda}, Y_{-\lambda})Z_{-\lambda}=0.
\]
\end{lemma}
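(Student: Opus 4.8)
The plan is to differentiate the closed-form curvature expressions (\ref{31}) and (\ref{310}) on the eigendistributions and to observe that, after using the product rule, the result collapses to the covariant derivative of the metric, which vanishes. The computation is already prefigured in (\ref{Im40}), so I would package it cleanly as follows.

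First I would fix $W_\lambda, X_\lambda, Y_\lambda, Z_\lambda \in D(\lambda)$ and note that the scalar $2(1+\lambda)-\mu$ appearing in (\ref{31}) is a constant, hence annihilated by $\nabla_{W_\lambda}$. Expanding $(\nabla_{W_\lambda}R)(X_\lambda, Y_\lambda)Z_\lambda$ via the definition (\ref{FS}) then produces, on the one hand, the Leibniz derivatives of the bracket $g(Y_\lambda, Z_\lambda)X_\lambda - g(X_\lambda, Z_\lambda)Y_\lambda$, and on the other hand the three correction terms $R(\nabla_{W_\lambda}X_\lambda, Y_\lambda)Z_\lambda$, $R(X_\lambda, \nabla_{W_\lambda}Y_\lambda)Z_\lambda$, and $R(X_\lambda, Y_\lambda)\nabla_{W_\lambda}Z_\lambda$.

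The key step, and the only one requiring care, is to re-express these three correction terms. Here I would invoke part (i) of Theorem \ref{THI}: since all arguments lie in $D(\lambda)$, each $\nabla_{W_\lambda}(\cdot)$ again lies in $D(\lambda)$, so formula (\ref{31}) applies a second time to each correction term. Collecting everything, the contributions in which $\nabla_{W_\lambda}$ has fallen on one of the vector fields $X_\lambda, Y_\lambda, Z_\lambda$ cancel in pairs against the matching Leibniz terms, leaving only the contributions where $\nabla_{W_\lambda}$ has fallen on the metric, namely $[2(1+\lambda)-\mu]\big[(\nabla_{W_\lambda}g)(Y_\lambda, Z_\lambda)X_\lambda - (\nabla_{W_\lambda}g)(X_\lambda, Z_\lambda)Y_\lambda\big]$. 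The hard part is thus purely bookkeeping: tracking this cancellation, which is exactly what Theorem \ref{THI}(i) makes possible by keeping every derivative inside the distribution, so that the closed form (\ref{31}) remains applicable to the corrected arguments.

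Finally, since $\nabla$ is the Levi-Civita connection, $\nabla g = 0$, so the surviving terms vanish and $(\nabla_{W_\lambda}R)(X_\lambda, Y_\lambda)Z_\lambda = 0$. I would then repeat the argument verbatim on $D(-\lambda)$, using (\ref{310}) in place of (\ref{31}) with the constant coefficient $2(1-\lambda)-\mu$ together with part (i) of Theorem \ref{THI} for $D(-\lambda)$, to obtain $(\nabla_{W_{-\lambda}}R)(X_{-\lambda}, Y_{-\lambda})Z_{-\lambda} = 0$, which completes the proof.
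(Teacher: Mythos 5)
Your proposal is correct and is essentially identical to the paper's own argument: the authors carry out exactly this computation in (\ref{Im40}), expanding $(\nabla_{W_{\lambda}}R)(X_{\lambda}, Y_{\lambda})Z_{\lambda}$ via (\ref{FS}), invoking part (i) of Theorem \ref{THI} so that (\ref{31}) applies to the corrected arguments, and watching everything cancel down to $(\nabla_{W_{\lambda}}g)$ terms, which vanish. The $D(-\lambda)$ case is handled in the paper the same way you describe, using (\ref{310}).
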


\bigskip
Now, we are going to consider the existences of $\phi$-recurrent $(\kappa, \mu)$ contact metric manifold $M^{2n+1}$ with $n>1$.

\begin{theorem}
There is no $\phi$-recurrent $(\kappa, \mu)$ contact metric manifold $M^{2n+1}$ $(n>1)$.
\end{theorem}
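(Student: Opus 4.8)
The plan is to show that the recurrence $1$-form $A$ in (\ref{rec}) is forced to vanish identically, contradicting the hypothesis $A\neq 0$. I would first dispose of the Sasakian case: if $\kappa=1$ then $M^{2n+1}$ is Sasakian by Theorem \ref{BLAIR}, and Theorem \ref{SRE1} already rules out a $\phi$-recurrent structure. So I assume $\kappa<1$, put $\lambda=\sqrt{1-\kappa}>0$, and work with the eigendistributions $D(\lambda)$ and $D(-\lambda)$ of Theorem \ref{BLAIR}; since $\phi$ interchanges them each has dimension $n\geq 2$, the hypothesis $n>1$ being used here.

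The first main step is to distill (\ref{rec}) into a purely algebraic identity, exactly as in the proof of Theorem \ref{SRE1}. Using $\phi^2=-\mathrm{Id}+\eta\otimes\xi$ from (\ref{con}), I rewrite (\ref{rec}) as $(\nabla_W R)(X,Y)Z=\eta((\nabla_W R)(X,Y)Z)\xi-A(W)R(X,Y)Z$, take the cyclic sum over $W,X,Y$, and apply the second Bianchi identity: the derivative terms cancel, and the $\xi$-terms cancel as well because $\eta$ of the Bianchi sum vanishes. This produces
\[
A(W)R(X,Y)Z+A(X)R(Y,W)Z+A(Y)R(W,X)Z=0.\qquad(\dagger)
\]

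The second step uses Lemma \ref{Esymmetric} to obtain a foothold. For vectors in $D(\lambda)$ the lemma gives $(\nabla_{W_\lambda}R)(X_\lambda,Y_\lambda)Z_\lambda=0$, so (\ref{rec}) collapses to $A(W_\lambda)R(X_\lambda,Y_\lambda)Z_\lambda=0$; by (\ref{31}), choosing orthonormal $X_\lambda,Y_\lambda\in D(\lambda)$ (possible since $\dim D(\lambda)\geq 2$) and $Z_\lambda=Y_\lambda$ yields $R(X_\lambda,Y_\lambda)Y_\lambda=[2(1+\lambda)-\mu]X_\lambda$, whence $A|_{D(\lambda)}=0$ as soon as $2(1+\lambda)-\mu\neq 0$; symmetrically, (\ref{310}) forces $A|_{D(-\lambda)}=0$ when $2(1-\lambda)-\mu\neq 0$. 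The two scalars $2(1\pm\lambda)-\mu$ differ by $4\lambda>0$, so they cannot vanish simultaneously, and $A$ must vanish on at least one of the two distributions. Say $A|_{D(\lambda)}=0$, and fix orthonormal $e_1,e_2\in D(\lambda)$ with $R(e_1,e_2)e_2=[2(1+\lambda)-\mu]e_1\neq 0$. Substituting the triple $(X,Y,Z)=(e_1,e_2,e_2)$ into $(\dagger)$, the last two terms drop out because $A(e_1)=A(e_2)=0$, leaving $A(W)R(e_1,e_2)e_2=0$ for \emph{every} vector field $W$. Hence $A(W)=0$ for all $W$, i.e. $A\equiv 0$, the desired contradiction.

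The step I expect to be most delicate is the reduction producing $(\dagger)$, together with the bookkeeping that guarantees a nonvanishing horizontal curvature. The whole argument hinges on $D(\pm\lambda)$ being at least two-dimensional, so that (\ref{31}) and (\ref{310}) actually furnish a nonzero $R(e_1,e_2)e_2$; this is exactly what fails in dimension three, where $\dim D(\pm\lambda)=1$, and explains why the three-dimensional case in Theorem \ref{6.3} behaves differently. Once a single nonzero horizontal curvature is secured, $(\dagger)$ propagates the vanishing of $A$ to \emph{all} directions at once, with no case analysis on $(\kappa,\mu)$; in particular the degenerate value $\kappa=\mu=0$ (where $2(1+\lambda)-\mu=4\neq 0$) requires no separate treatment.
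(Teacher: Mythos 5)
Your proposal is correct, and while it is built from the same two ingredients as the paper --- the second-Bianchi reduction of (\ref{rec}) to the cyclic identity $A(W)R(X,Y)Z+A(X)R(Y,W)Z+A(Y)R(W,X)Z=0$, and the combination of Lemma \ref{Esymmetric} with (\ref{31}), (\ref{310}) to force $A$ to vanish on an eigendistribution --- the way you extract the final contradiction is genuinely different and considerably shorter. The paper only uses the $\eta$-component of the cyclic identity (its equations (\ref{eta}), (\ref{Mohem})), which yields the scalar relations $A(e_j)(\kappa+\mu\lambda)=0$ and $A(\phi e_l)(\kappa+\mu\lambda)=0$; it must then run a four-case analysis according to where $A$ vanishes, and the cases where $A$ is supported only on $\xi$ or only on $D(-\lambda)$ require page-long computations with the connection coefficients $a^r_i$, $b^r_i$, $\Gamma^r_{li}$. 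You instead keep the full vector form of the cyclic identity and observe that once $A$ vanishes on one eigendistribution carrying a nonzero internal curvature $R(e_1,e_2)e_2=[2(1+\lambda)-\mu]e_1$ (guaranteed because $2(1+\lambda)-\mu$ and $2(1-\lambda)-\mu$ differ by $4\lambda>0$ and so cannot both vanish, and because $\dim D(\pm\lambda)=n\geq 2$), the identity with $(X,Y,Z)=(e_1,e_2,e_2)$ propagates $A(W)=0$ to every $W$, including $W=\xi$, in a single stroke. This eliminates the paper's Cases 1--4 entirely and never needs the relations involving $\kappa+\mu\lambda$. Your closing remark correctly identifies why the argument is specific to $n>1$: in dimension three the eigendistributions are lines and (\ref{31}) gives no nonzero horizontal curvature, which is consistent with Theorem \ref{6.3}.
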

\begin{proof}
Let $M^{2n+1}$ $(n>1)$, be a $\phi$-recurrent $(\kappa, \mu)$-contact metric manifold. Then  (\ref{eta}) holds. Setting (\ref{rep2}) in (\ref{eta}), implies that
\begin{align}\label{Mohem}
&A(W)[\kappa\{g(Y, Z)\eta(X)-g(X, Z)\eta(Y)\}+\mu\{g(hY, Z)\eta(X)-g(hX, Z)\eta(Y)\}]\nonumber\\
&+A(X)[\kappa\{g(W, Z)\eta(Y)-g(Z, Y)\eta(W)\}+\mu\{g(hW, Z)\eta(Y)-g(Z, hY)\eta(W)\}]\nonumber\\
&+A(Y)[\kappa\{g(X, Z)\eta(W)-g(W, Z)\eta(X)\}+\mu\{g(hX, Z)\eta(W)-g(hW, Z)\eta(X)\}]\nonumber\\
&=0.
\end{align}
Let $\big\{e_i, \phi e_i, \xi\big\}^n_{i=1}$ be an orthonormal $\phi$-basis with $e_i\in D(\lambda)$. By plugging  $Y=Z=e_i$, $W=e_j (j\neq i)$ and $X=\xi$ in (\ref{Mohem}),  we get
\begin{equation}\label{i}
A(e_j)(\kappa+\mu\lambda)=0.
\end{equation}
Similarly, setting $Y=Z=e_i$, $W=\phi e_l$ and $X=\xi$ in (\ref{Mohem}) we obtain
\begin{equation}\label{ii}
A(\phi e_l)(\kappa+\mu\lambda)=0.
\end{equation}
By using Lemma \ref{Esymmetric} and (\ref{rec}), it follows that
\begin{align*}
0&=\phi^2((\nabla_{e_j}R)(e_i, e_k)e_k)=A(e_j)R(e_i, e_k)e_k=[2(1+\lambda)-\mu]A(e_j)e_i,\ i\neq k,\\
0&=\phi^2((\nabla_{\phi e_l}R)(\phi e_i, \phi e_k)\phi e_k)=A(\phi e_l)R(\phi e_i, \phi e_k)\phi e_k\nonumber\\
&\hspace{4.4cm}=[2(1-\lambda)-\mu]A(\phi e_l)\phi e_i,\ \ i\neq k.
\end{align*}
The above equations give us
\begin{align}
A(e_j)&=0\ \ \text{or}\ \ \mu=2(1+\lambda),\label{v}\\
A(\phi e_l)&=0\ \ \text{or}\ \ \mu=2(1-\lambda).\label{vi}
\end{align}
Now we consider all of cases that would be occur for $A$.\\\\
\textbf{Case 1:} Let $A$ be non-zero on both of $D(\lambda)$ and $D(-\lambda)$. Then there exist $1\leq j, l\leq n$ such that $A(e_j)\neq 0$ and $A(\phi e_l)\neq 0$. In this case, using (\ref{v}), (\ref{vi}) we deduce $\lambda=0$ which is a contradiction.\\\\
\textbf{Case 2:} Let $A$ be non-zero on $D(\lambda)$ and zero on $D(-\lambda)$. Then there exist $1\leq j\leq n$ such that $A(e_j)\neq 0$. In this case, using (\ref{i}) and (\ref{v}) we derive that $\kappa+\mu\lambda=0$ and $\mu=2(1+\lambda)$. A simple substitution yields $\lambda=-1$ which is a contradiction.\\\\
\textbf{Case 3:} Let $A$ be zero on both of $D(\lambda)$ and $D(-\lambda)$. Since $A$ is a non-zero $1$-form, then $A(\xi)\neq0$. Since $g(e_i, \xi)=0$, then $g(\nabla_\xi e_i, \xi)=0$. Thus $\nabla_\xi e_i$ has no component with respect $\xi$. Therefore we can write
\begin{equation}\label{56}
\nabla_\xi e_i=a^r_ie_r+b^r_i\phi e_r.
\end{equation}
Using (\ref{56}), (\ref{27}), (\ref{30}) and (\ref{31}) we obtain
\begin{align}
(\nabla_{_\xi}R)(e_j,e_k)e_s&= [2(1+\lambda)-\mu]\big[(\delta_{ks}b_{j}^r-\delta_{js}b_{k}^r)\phi e_r+a_{j}^se_k-a_{k}^se_j+a_{s}^je_k \nonumber \\
&\ \ -a_{s}^ke_j\big]+\big[\kappa b_{j}^s\phi e_k+\mu b_{j}^k\phi e_s-\kappa b_{k}^s\phi e_j-\mu b_{k}^j\phi e_s+\kappa b_{s}^j\phi e_k\nonumber \\
&\ \ -\kappa b_{s}^k\phi e_j-\mu b_{s}^j\phi e_k+\mu b_{s}^k\phi e_j \big].\label{b1}
\end{align}
By $g(e_j, e_k)=\delta_{jk}$ and (\ref{56}),  we get
\[
a^k_j+a^j_k=0.
\]
Then (\ref{b1}) reduce to the following
\begin{align}
(\nabla_{_\xi}R)(e_j,e_k)e_s&= [2(1+\lambda)-\mu](\delta_{ks}b_{j}^r-\delta_{js}b_{k}^r)\phi e_r+(kb_{j}^s+kb_{s}^j-\mu b_{s}^j)\phi e_k\nonumber\\
&\ \ -(\kappa b_{k}^s+\kappa b_{s}^k-\mu b_{s}^k)\phi e_j+\mu(b_{j}^k-b^j_k)\phi e_s.\label{a9}
\end{align}
Using (\ref{rec}), (\ref{31}) and (\ref{a9}), we deduce that $\mu=2(1+\lambda)$.

Since $\nabla_\xi\xi=0$, then
\begin{equation}\label{eta2}
(\nabla_\xi R)(\xi, Y)\xi=\kappa(\nabla_\xi\eta)(Y)\xi+\mu[h(\nabla_\xi Y)-\nabla_\xi hY].
\end{equation}
Using the first part of (\ref{rep1}) implies that
\[
(\nabla_\xi\eta)(Y)=g(\xi+h\xi, \phi Y)=g(\xi, \phi Y)=\eta(\phi(Y))=0.
\]
By setting the above equation in (\ref{eta2}), one can obtains
\begin{equation}\label{eta3}
(\nabla_\xi R)(\xi, Y)\xi=\mu[h(\nabla_\xi Y)-\nabla_\xi hY].
\end{equation}
On the other hand, (\ref{56}) gives us
\begin{equation}\label{eta4}
h(\nabla_\xi e_i)=\lambda(a^j_ie_j-b^j_i\phi e_j),\ \ \ \nabla_\xi he_i=\lambda(a^j_ie_j+b^j_i\phi e_j).
\end{equation}
Setting $Y=e_i$ in (\ref{eta3}) and using (\ref{eta4}) we get
\[
(\nabla_\xi R)(\xi, e_i)\xi=-2\mu\lambda b^j_i\phi e_j,
\]
which gives us
\[
\phi^2\big((\nabla_\xi R)(\xi, e_i)\xi\big)=2\mu\lambda b^j_i\phi e_j.
\]
Using (\ref{esi3}), (\ref{rec}) and the above equation we have
\[
(\kappa+\mu\lambda)A(\xi)e_i+2\mu\lambda b^j_i\phi e_j=0.
\]
Since $A(\xi)\neq 0$, then from the above equation we deduce  that $\kappa+\mu\lambda=0$. As we see in Case 2, this contradicts $\mu=2(1+\lambda)$.\\\\
\textbf{Case 4.} Let $A$ be non-zero on $D(-\lambda)$ and zero on $D(\lambda)$. Then there exists $1\leq l\leq n$ such that $A(\phi e_l)\neq 0$. In this case, by (\ref{ii}) and (\ref{vi}) we have $\kappa+\mu\lambda=0$ and  $\mu=2(1-\lambda)$ which yield $3\lambda^2-2\lambda-1=0$. This equation and the condition
$\lambda>0$ give us $\lambda=1$. Thus we have $\kappa=\mu=0$.

Now, we compute $(\nabla_{\phi e_l}R)(e_i, e_j)e_k$. Since $g(\nabla_{\phi e_l}e_i, \xi)=-g(e_i, \nabla_{\phi e_l}\xi)=0$, then $\nabla_{\phi e_l}e_i$ has no component with respect $\xi$. Also, from Theorem \ref{THI} we deduce that  $\nabla_{\phi e_l}e_i$ has no component in $D(-\lambda)$. Thus we can write
\begin{equation}\label{fi}
\nabla_{\phi e_l}e_i=\Gamma^r_{li}e_r,
\end{equation}
where $\Gamma^r_{li}=g(\nabla_{\phi e_l}e_i, e_r)$. From (\ref{fi}) and considering $g(e_i, e_r)=\delta_{ir}$,  we obtain
\begin{equation}\label{fi1}
\Gamma^r_{li}=-\Gamma^i_{lr}.
\end{equation}
Using (\ref{31}), (\ref{fi}), (\ref{fi1}) and noting  $\kappa=\mu=0$, it follows that
\begin{equation}\label{fi2}
(\nabla_{\phi e_l}R)(e_i, e_j)e_k=0.
\end{equation}
By using (\ref{rec}), (\ref{31}) and (\ref{fi2}),  we deduce that $A(\phi e_l)(\delta_{jk}e_i-\delta_{ik}e_j)=0$ which gives $A(\phi e_l)=0$ that contradicts the assumption.

According to the above cases, the proof is completes.
\end{proof}
In \cite{B}, Blair proved the following.
\begin{theorem}\label{Blair}{\rm (\cite{B})}
\emph{If a contact metric manifold $M^{2n+1}$ is of constant curvature
$c$ and $n>1$, then $c=1$ and the structure is Sasakian.}
\end{theorem}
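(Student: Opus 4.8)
The plan is to reduce the problem to the $(\kappa,\mu)$-setting already developed above and then to play the constant-curvature assumption against the explicit curvature on the eigendistribution $D(\lambda)$, the hypothesis $n>1$ being what makes the two incompatible. First I would record that constant curvature $c$ means $R(X,Y)Z=c[g(Y,Z)X-g(X,Z)Y]$, so in particular $R(X,Y)\xi=c(\eta(Y)X-\eta(X)Y)$. Comparing this with the defining relation of the $(\kappa,\mu)$-nullity distribution (equivalently with \eqref{esi3}), it says exactly that $\xi\in N(c,0)$; hence $M^{2n+1}$ is a $(c,0)$-contact metric manifold. By Theorem \ref{BLAIR} this already gives $c=\kappa\le 1$, and it lets me invoke all of the structure of Theorem \ref{BLAIR} with $\kappa=c$ and $\mu=0$.

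Next I would dispose of the boundary case. If $c=1$, then Theorem \ref{BLAIR} immediately yields $h=0$ and that $M$ is Sasakian, which is the desired conclusion. So I would assume $c<1$ and argue by contradiction. Setting $\lambda=\sqrt{1-c}>0$, Theorem \ref{BLAIR} provides the three mutually orthogonal eigendistributions $D(0)=\mathbb{R}\xi$, $D(\lambda)$ and $D(-\lambda)$ of $h$; since $\phi$ maps $D(\lambda)$ isomorphically onto $D(-\lambda)$ and these together with $\mathbb{R}\xi$ span $T_pM$, one has $\dim D(\lambda)=n$.

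The heart of the argument, and where $n>1$ is used, is the following comparison of two expressions for the curvature restricted to $D(\lambda)$. On one hand, constant curvature gives $R(X_{\lambda},Y_{\lambda})Z_{\lambda}=c[g(Y_{\lambda},Z_{\lambda})X_{\lambda}-g(X_{\lambda},Z_{\lambda})Y_{\lambda}]$. On the other hand, formula \eqref{31} of Theorem \ref{BLAIR} with $\mu=0$ gives $R(X_{\lambda},Y_{\lambda})Z_{\lambda}=2(1+\lambda)[g(Y_{\lambda},Z_{\lambda})X_{\lambda}-g(X_{\lambda},Z_{\lambda})Y_{\lambda}]$. Because $n>1$, the space $D(\lambda)$ has dimension $n\ge 2$, so I may choose orthonormal vectors $X_{\lambda},Y_{\lambda}\in D(\lambda)$ and put $Z_{\lambda}=Y_{\lambda}$; the common bracket then collapses to the nonzero vector $X_{\lambda}$, and equating coefficients forces $c=2(1+\lambda)$. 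Combined with $\lambda^2=1-c$ this gives $(\lambda+1)^2=0$, i.e. $\lambda=-1$, contradicting $\lambda>0$. Hence $c<1$ is impossible, so $c=1$ and $M$ is Sasakian.

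The only genuinely delicate point — the main obstacle — is the legitimacy of selecting two independent vectors inside $D(\lambda)$, which is precisely what $n>1$ guarantees: for $n=1$ one has $\dim D(\lambda)=1$, the comparison on $D(\lambda)$ carries no content, and no contradiction arises (consistent with the flat three-dimensional examples discussed earlier). Once the reduction to $\mu=0$ and the identification of $\dim D(\lambda)=n$ are in place, the remainder is a direct substitution into curvature identities already available in the excerpt.
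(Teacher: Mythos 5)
The paper itself offers no proof of this statement: Theorem \ref{Blair} is quoted verbatim from Blair's book \cite{B} (the result is originally due to Olszak), so there is no internal argument to compare yours against. Judged on its own terms, your proof is correct. The reduction is sound: constant curvature $c$ gives $R(X,Y)\xi=c(\eta(Y)X-\eta(X)Y)$, which is precisely the statement $\xi\in N(c,0)$, so Theorem \ref{BLAIR} applies with $\kappa=c$, $\mu=0$ and yields $c\le 1$, with the case $c=1$ being exactly the Sasakian conclusion. For $c<1$ your dimension count $\dim D(\lambda)=n$ is justified (on the contact distribution $h^2=(1-\kappa)\,\mathrm{Id}$ has no kernel, and $\phi$ interchanges $D(\lambda)$ and $D(-\lambda)$), and $n>1$ is indeed what licenses choosing orthonormal $X_\lambda,Y_\lambda\in D(\lambda)$; comparing \eqref{31} with the constant-curvature tensor forces $c=2(1+\lambda)$, which is incompatible with $\lambda=\sqrt{1-c}>0$ (one can even stop at $c=2(1+\lambda)>2>1$ without solving the quadratic $(\lambda+1)^2=0$). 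The difference from the classical treatment is one of machinery: Olszak's and Blair's original arguments work directly from the general contact metric identities for $\nabla\phi$ and $\ell=R(\cdot,\xi)\xi$ and are more computational but self-contained, whereas your route is shorter and more transparent at the price of importing the full curvature description \eqref{27}--\eqref{310} of the Blair--Koufogiorgos--Papantoniou theorem, which the present paper also only cites. Within the logical economy of this paper, where Theorem \ref{BLAIR} is already assumed, your derivation is a legitimate and clean way to recover Theorem \ref{Blair}.
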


Now, we are going to prove the following.
\begin{theorem}\label{Esi}
There is no non-flat $(2n+1)$-dimensional $\phi$-recurrent contact metric manifold of constant curvature.
\end{theorem}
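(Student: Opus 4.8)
The plan is to split into the two cases $n=1$ and $n>1$, because the geometry of contact metric manifolds of constant curvature is genuinely different in these two regimes.

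For $n>1$, Theorem~\ref{Blair} does almost all the work: a contact metric manifold of constant curvature $c$ with $n>1$ must have $c=1$ and be Sasakian. A space of constant curvature $1$ certainly is not flat, so the only non-flat candidates are Sasakian. But Theorem~\ref{5.4} asserts that there exists no $\phi$-recurrent Sasakian manifold at all. Hence for $n>1$ there can be no non-flat $\phi$-recurrent contact metric manifold of constant curvature — the contradiction is immediate and requires no computation.

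The case $n=1$ (dimension $3$) is the substantive one, since Theorem~\ref{Blair} is silent there. **The approach** I would take is to reduce to the already-proved three-dimensional results. A $3$-dimensional contact metric manifold of constant curvature $c$ satisfies $R(X,Y)\xi=c(\eta(Y)X-\eta(X)Y)$, which is exactly the defining relation \eqref{esi3} of a $(\kappa,\mu)$-contact metric manifold with $\kappa=c$ and $\mu=0$; in particular $\xi$ lies in the $(\kappa,\mu)$-nullity distribution, so $M^3$ is a $(c,0)$-contact metric manifold. Having placed $M^3$ inside the class covered by Theorem~\ref{6.3}, I can invoke that theorem directly: a $3$-dimensional $(\kappa,\mu)$-contact metric manifold is $\phi$-recurrent if and only if it is flat. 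Thus a $3$-dimensional $\phi$-recurrent contact metric manifold of constant curvature is necessarily flat, which is precisely the assertion (no non-flat example exists).

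**The main obstacle** I anticipate is the verification that constant curvature forces the $(\kappa,\mu)$-structure in dimension $3$, i.e. that the constant-curvature curvature tensor genuinely yields $R(X,Y)\xi=c(\eta(Y)X-\eta(X)Y)$ with $\mu=0$ so that \eqref{esi3} holds; one must check this is consistent with the self-adjoint operator $h$ (in fact it forces $h=0$ when $c=1$ and otherwise pins down $\kappa=c$). Once that identification is secured, the rest is a clean citation of Theorems~\ref{Blair}, \ref{5.4} and~\ref{6.3}, and I would close by remarking that this contradicts Theorem~4.1 of \cite{JYD}, as promised in the introduction.
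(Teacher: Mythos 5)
Your proof is correct, and your $n>1$ half coincides exactly with the paper's (both simply combine Theorems \ref{Blair} and \ref{5.4}). For the three-dimensional case, however, you take a genuinely different route. The paper argues directly: if $M^3$ has constant curvature $c\neq 0$, then $R(X,Y)Z=c\,(g(Y,Z)X-g(X,Z)Y)$ with $c$ constant yields $(\nabla_WR)(X,Y)Z=0$ by the same computation as (\ref{Im40}); substituting $Y=Z=\xi$ into the recurrence relation (\ref{rec}) then collapses it to $cA(W)[X-\eta(X)\xi]=0$, and choosing $X$ nonzero and orthogonal to $\xi$ contradicts $c\neq 0$ and $A\neq 0$. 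You instead observe that constant curvature $c$ gives $R(X,Y)\xi=c(\eta(Y)X-\eta(X)Y)$, i.e.\ $\xi\in N(c,0)$, so $M^3$ is a $(c,0)$-contact metric manifold, and then invoke the forward implication of Theorem \ref{6.3}. That reduction is legitimate --- $c$ is constant, so the pair $(c,0)\in\mathbb{R}^2$ qualifies, and the constraint $\kappa\leq 1$ is then automatic --- and your anticipated obstacle is not really one: nothing about $h$ needs to be verified to place $\xi$ in $N(c,0)$. The trade-off is that your argument leans on the heavier machinery behind Theorem \ref{6.3} (Lemma \ref{lemma}, the explicit frame computations, and Theorem \ref{Mohim}), whereas the paper's direct computation is elementary and self-contained, and would in fact dispose of all $n\geq 1$ uniformly without appealing to Theorem \ref{Blair} at all.
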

\begin{proof}
Let $M^{2n+1}$ be a non-flat $\phi$-recurrent contact metric manifold. If $n>1$, then according to Theorems \ref{5.4} and  \ref{Blair},  the proof is obvious. Now let $n=1$, i.e., let $M$ be a 3-dimensional non-flat  $\phi$-recurrent contact metric manifold. If $M$ has the constant curvature $c\neq 0$, then we have
\begin{equation}\label{curv}
R(X, Y)Z=c(g(Y, Z)X-g(X, Z)Y).
\end{equation}
Similar to (\ref{Im40}), from (\ref{FS}) and (\ref{curv}) we get
\begin{equation}\label{Imm}
(\nabla_WR)(X, Y)Z=0.
\end{equation}
Putting $Y=Z=\xi$ in (\ref{rec}) and using (\ref{curv}) and (\ref{Imm}) it follows that
\[
cA(W)[X-\eta(X)\xi]=0.
\]
If $X$ is a non zero vector field orthogonal to $\xi$, then the above equation gives us $cA(W)X=0$, which is a contradiction to $c\neq 0$ and $A(W)\neq 0$.
\end{proof}

\bigskip
Now, we are going to consider the existences of  locally $\phi$-recurrent contact metric manifold of constant curvature.

\begin{theorem}
There is no non-flat $(2n+1)$-dimensional locally $\phi$-recurrent contact metric manifold of constant curvature.
\end{theorem}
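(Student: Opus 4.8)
The plan is to run exactly the strategy of Theorem \ref{Esi}, but arranged so that every vector field appearing in the recurrence identity is orthogonal to $\xi$, since in the locally $\phi$-recurrent setting the defining relation (\ref{rec}) is only available for such fields. I would begin by assuming $M^{2n+1}$ is a locally $\phi$-recurrent contact metric manifold of constant curvature $c$; being non-flat forces $c \neq 0$. The curvature tensor is then given by (\ref{curv}), and exactly as in (\ref{Im40})--(\ref{Imm}) the compatibility $\nabla_W g = 0$ makes $(\nabla_W R)(X, Y)Z = 0$ for all vector fields, whence $\phi^2((\nabla_W R)(X, Y)Z) = 0$ identically.

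Next I would feed this into the locally $\phi$-recurrent relation. For $W, X, Y, Z$ all orthogonal to $\xi$, (\ref{rec}) combined with the vanishing just established yields $A(W) R(X, Y)Z = 0$. The step where Theorem \ref{Esi} set $Y = Z = \xi$ is now forbidden, so instead I would choose $X, Y$ orthonormal and both orthogonal to $\xi$ — possible because the contact distribution has dimension $2n \geq 2$ — and set $Z = Y$. Then (\ref{curv}) gives $R(X, Y)Y = c(g(Y, Y)X - g(X, Y)Y) = cX \neq 0$, so $A(W)\, cX = 0$ forces $A(W) = 0$ for every $W$ orthogonal to $\xi$.

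This single computation covers all $n \geq 1$ uniformly, so no separate appeal to Theorems \ref{5.4} and \ref{Blair} (as in the $n > 1$ part of Theorem \ref{Esi}) is needed — which is fortunate, since those give only the non-existence of $\phi$-recurrent, not locally $\phi$-recurrent, Sasakian manifolds. The conclusion is that $A$ vanishes on the contact distribution, and since in the locally $\phi$-recurrent framework the recurrence $1$-form is only ever probed on vectors orthogonal to $\xi$, this refutes its non-triviality.

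The main obstacle I anticipate is precisely the restriction $W \perp \xi$: it prevents recovering any information about $A(\xi)$, so unlike in Theorem \ref{Esi} one cannot conclude that $A$ vanishes as a genuine global $1$-form. The honest resolution is that the locally $\phi$-recurrent condition never evaluates $A$ on $\xi$, so the relevant non-triviality is non-triviality on the contact distribution, which the argument contradicts. Minor care is also needed to confirm that two orthonormal vectors orthogonal to $\xi$ exist (automatic from $\dim M \geq 3$) and that $R(X, Y)Y$ is genuinely non-zero, i.e. that $c \neq 0$, which is guaranteed by the non-flat hypothesis.
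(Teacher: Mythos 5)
Your proposal is correct and follows essentially the same route as the paper: derive $(\nabla_W R)(X,Y)Z=0$ from constant curvature, then evaluate the recurrence relation on a pair of orthonormal vectors orthogonal to $\xi$ (the paper uses $Y=Z=e_i$, $X=e_j$ from a $\phi$-basis, which is exactly your choice) to force $cA(W)=0$. Your added remark about $A(\xi)$ being inaccessible in the locally $\phi$-recurrent setting is a fair point of care that the paper glosses over, but it does not change the argument.
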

\begin{proof}
Let $M^{2n+1}$ be a non-flat locally $\phi$-recurrent contact metric manifold. If $M$ has the constant curvature $c\neq 0$, then (\ref{curv}) holds. Therefore, similar to Theorem \ref{Esi}, we deduce that
\begin{equation}\label{IM20}
(\nabla_WR)(X, Y)Z=0.
\end{equation}
Let $\big\{e_i, \phi e_i, \xi\big\}$, $i=1,\ldots, n$, be an orthonormal $\phi$-basis for $M^{2n+1}$. Putting $Y=Z=e_i$ and $X=e_j$ ($j\neq i$) in (\ref{rec}) and using (\ref{curv}) and (\ref{IM20}),  we obtain $cA(W)e_j=0$, which is a contradiction to $c\neq 0$ and $A(W)\neq 0$.
\end{proof}


\bigskip

\noindent
Esmaeil Peyghan\\
Department of Mathematics, Faculty  of Science\\
Arak University\\
Arak 38156-8-8349,  Iran\\
Email: epeyghan@gmail.com
\bigskip

\noindent
Akbar Tayebi\\
Department of Mathematics, Faculty  of Science\\
University of Qom \\
Qom. Iran\\
Email:\ akbar.tayebi@gmail.com

\end{document}